\definecolor{viola}{rgb}{0.3,0,0.7}
\definecolor{ciclamino}{rgb}{0.5,0,0.5}
\def\pier #1{{\color{red}#1}}
\def\pier #1{#1}
\def\luca #1{{\color{red}#1}}
\def\luca #1{#1}
\theoremstyle{plain}
\newtheorem{thm}{Theorem}[section]
\newtheorem{lem}[thm]{Lemma}
\theoremstyle{definition}
\newtheorem{rmk}[thm]{Remark}
\def\eps{\varepsilon}
\def\weakstar{\stackrel{*}{\rightharpoonup}}
\def\En{\mathbb{N}}
\def\Ar{\mathbb{R}}
\def\Pi{\mathbb{P}}
\def\beq{\begin{equation}}
\def\eeq{\end{equation}}
\def\rarr{\rightarrow}
\def\l|{\left\|}
\def\r|{\right\|}
\def\L2{L^2(\Omega)}
\def\H1{H^1(\Omega)}
\def\gH1{H^1_{0,\Gamma_b}(\Omega)}
\def\nL2#1{\l|#1\r|_{\L2}}
\def\nH1#1{\l|#1\r|_{\H1}}
\def\rarrw{\rightharpoonup}
\def\Div{\mathop{\rm div}}
\def\gL2{L^2(\Gamma_1)}
\def\ngL2#1{\l|#1\r|_{\gL2}}
\begin{document}
\begin{center}
\pier{%
{\huge\rm Existence of solutions for a model\\[0.2cm] of microwave heating\/\footnote{{\bf 
Acknowledgments.}\quad\rm The first author gratefully acknowledges some 
financial support from the MIUR-PRIN Grant 2010A2TFX2 ``Calculus of Variations'', the 
GNAMPA (Gruppo Nazionale per l'Analisi Matematica, la Probabilit\`a e le loro 
Applicazioni) of INdAM (Istituto Nazionale di Alta Matematica) and the IMATI -- C.N.R. 
Pavia.}}
\\[0.5cm]
% \vskip-2cm
{\large\sc Pierluigi Colli$^{(1)}$}\\
{\normalsize e-mail: {\tt pierluigi.colli@unipv.it}}\\[.25cm]
{\large\sc Luca Scarpa$^{(1),\, (2)}$}\\
{\normalsize e-mail: {\tt luca.scarpa01@ateneopv.it}}\\[.25cm]
$^{(1)}$
{\small Dipartimento di Matematica ``F. Casorati'', Universit\`a di Pavia}\\
{\small via Ferrata 1, 27100 Pavia, Italy}\\[.2cm]
$^{(2)}$
{\small Department of Mathematics, \luca{University College London}}\\
{\small \luca{Gower Street, London WC1E 6BT,} United Kingdom}}\\[1cm]
\end{center}       

\begin{abstract}
This paper is concerned with a system of differential equations related to a circuit model for 
microwave heating, complemented by suitable initial and boundary conditions. A RCL circuit with a 
thermistor is representing the microwave heating process with temperature-induced modulations 
on the electric field. The unknowns of the PDE system are the absolute temperature in the body, 
the voltage across the capacitor and the electrostatic potential. Using techniques based on 
monotonicity arguments and sharp estimates, we can prove the existence of a weak solution 
to the initial-boundary value problem.\\[.5cm]
{\bf AMS Subject Classification:} 35G61, 34A10, 35D30, 35Q79\\[.5cm]
{\bf Key words and phrases:} microwave heating, circuit model, evolutionary system of partial differential equations, weak solution, global existence
\end{abstract}

\pagestyle{myheadings}
\newcommand\testopari{\sc \pier{Pierluigi Colli and Luca Scarpa}}
\newcommand\testodispari{\sc \pier{Existence of solutions for a model of microwave heating}}
\markboth{\testodispari}{\testopari}

%%%%%%%%%%%%%%%%%%%%%%%%%%%%%%%%%%%%%%%%%%%%%%%%%%%%%%%%%%%%%%%%%%%%%%%%%%%%%%

\thispagestyle{empty}

\section{Introduction}
\setcounter{equation}{0}
\label{intro}

In this work, we deal with a problem which arises from a circuit model for microwave heating: in particular, we aim at proving the existence of a solution
to a \pier{coupled} system of three differential equations (an ODE, an elliptic equation 
and a nonlinear parabolic PDE) and appropriate initial and boundary conditions.
More specifically, we consider a traditional RLC circuit in which a thermistor has been inserted: this one has a cylindrical shape and takes into account the
temperature's effect (for further details see  \cite{reim-jor-min-var}). The system of equations we focus on is obtained in \cite{reim-jor-min-var}: it involves the \pier{absolute} temperature 
$\vartheta$, the voltage $V$ across the capacitor and the potential $\phi$. 

Firstofall, let us introduce the notation that we will use in the following:
\pier{%
\begin{equation}
  \label{omega}
  B\subseteq\Ar^2\text{ smooth bounded domain}\,, \quad \Omega=B\times(0,\pier{\ell})\,,
\end{equation} 
with $\ell>0$ being the height of the cylinder;  
\begin{equation}
  \label{gamma_dom}
  \Gamma=\partial\Omega= \Gamma_{\pier{\ell}}\cup \Gamma_b \,, \quad \Gamma_{\pier{\ell}}=\partial B\times(0,\pier{\ell})\,,
  \quad \Gamma_b=B\times\{0,l\}\,,\\
\end{equation}
 so that $\Gamma_{\pier{\ell}}$ denotes the lateral boundary and $\Gamma_b$ collects the 
 basis and top boundaries of the cylinder; 
  \begin{equation}
  \label{Q}
  \quad Q=\Omega\times(0,T)\,,  \quad \Sigma_{\pier{\ell}}=\Gamma_{\pier{\ell}}\times(0,T)\,, \quad \Sigma_b=\Gamma_b\times(0,T)\,,
\end{equation}
where  $T>0$ denotes the final time. Hence,} the thermistor is represented by the cylinder $\Omega$ in $\Ar^3$, while $Q$ represents the \pier{spatiotemporal} domain.

The system obtained in \cite{reim-jor-min-var}, \pier{complemented} with initial and boundary conditions, is the following:
\begin{gather}
  \label{system1}
  CV''(t)+\frac{1}{R}V'(t)+\frac{1}{L}V(t)=-\frac{d}{dt}\int_B{\sigma(\vartheta(t))\frac{\partial\phi}{\partial z}(t)\,dx}+f(t)  \quad \pier{\text{for }  t\in[0,T]}\,,\\
  \label{system2}
 \pier{ \Div\bigl(\sigma(\vartheta(t))\nabla\phi(t) \bigr)=0 \quad\text{in } \Omega, \pier{\text{ for }  t\in[0,T]}}\,,\\
  \label{system3}
  c_0\vartheta_t - \Div\bigl(k(\vartheta)\nabla\vartheta\bigr)= \sigma(\vartheta)\left|\nabla\phi\right|^2 \quad\text{in } Q\,,\\
  \label{init1}
  V(0)=V_0\pier{\ \hbox{ and }\ }  V'(0)=V'_0\,, \quad \vartheta(0)=\vartheta_0\  \, \pier{\text{in } \Omega} \,,\\
  \label{bound1}
  \pier{ \phi (t) =0 \, \text{ \pier{on} } B\times\{0\}\ \hbox{ and }\  \phi(t) =V (t) \, \text{ \pier{on} } B\times\{\ell\}\,,  \  \pier{\text{for }  t\in[0,T]}\,,}
  \\
  \label{pier1}
  \quad \pier{\sigma(\vartheta(t) )\nabla\phi (t) \cdot{\bf n} }=0 \, 
  \text{ \pier{on} } 
  \Gamma_{\pier{\ell}}, \pier{\text{ for }  t\in[0,T]}\,,\\
  \label{bound2}
 \pier{{}-{}} k(\vartheta)\nabla\vartheta\cdot{\bf n}=0 \, \text{ \pier{on} } \Sigma_b\,, 
  \quad \pier{{}-{}} k(\vartheta)\nabla\vartheta\cdot{\bf n}=h(\vartheta)-h(\vartheta_{\Gamma})\,
    \text{ \pier{on} } 
  \Sigma_{\pier{\ell}}\,,
\end{gather}
\pier{where $C,R,L>0$ are the capacitance, resistance and inductance coefficients, respectively, 
$f: (0,T) \to \Ar $ is a prescribed current source, $\sigma$ and $k$ represent the thermistor 
and heat conductivities and may vary with the temperature, $\vartheta_\Gamma$ denotes the 
known environment's temperature, and $h$ is a given increasing and continuous function with 
$h(0)=0$ .  
The notation ${\bf n} $ stands for the outward normal unit vector so that, in particular,
${\bf n} = (0,0,1) $ on $B\times\{\ell\}$ and  ${\bf n} = (0,0,-1) $ on $B\times\{0 \}$.}

\pier{Let us point out that throughout the paper we will use the notation $x$ for the variable in the two-dimensional domain $B$,
while $z$ will denote the third variable ranging in $(0,\ell)$.}
It is also important to \pier{emphasize} that the term
  \beq
    \label{IR}
    I_R(z,t):=\int_B{\sigma(\vartheta(t))\frac{\partial\phi}{\partial z}(t)\,dx}\,,
  \eeq
  which appears in equation \eqref{system1}, \pier{in principle could depend on both $z$ and $t$}: actually, we will \pier{check} that $I_R$ only depends on $t$, i.e.,
  \beq
    \label{z_der}
    \frac{\partial I_R}{\partial z}=0\,,
  \eeq
  so that $I_R=I_R(t)$ and \pier{the ordinary differential} equation \eqref{system1} makes sense.

The system \eqref{system1}--\eqref{bound2} \pier{turns out to be} interesting from a physical point of view. In fact, $V$ represents the voltage across the capacitor
and equation \eqref{system1} describes how this one is linked with  \pier{$\int_\Omega{\sigma(\vartheta)\frac{\partial\phi}{\partial z}\,dx}$, which renders 
the current across the thermistor. Equation \eqref{system2} is supplied with the mixed boundary conditions of Dirichlet type at $\Gamma_b$ (indeed, \eqref{bound1} involves the value of $V$ as well on the upper face) and the no-flux condition \eqref{pier1} across the boundary  $\Gamma_{\pier{\ell}}$, 
for $t$ varying in $(0,T)$. The third equation \eqref{system3} 
illustrates the evolution of the temperature with respect to the source of ohmic heating $\sigma(\vartheta)\left|\nabla\phi\right|^2$.
Moreover, the condition \eqref{bound2} is specifying the heat flux across the thermistor and, in particular, states the \luca{proportionality} with the difference of the values of the function $h$ on the inside and outside temperatures, at the boundary  $\Sigma_{\pier{\ell}}$. }

Beyond the relevance from the physical point of view, the problem \eqref{system1}--\eqref{bound2} is \pier{intriguing for} a mathematical approach:
as a matter of fact, it consists of an ordinary differential equation in $V$, an elliptic equation in $\phi$ and a nonlinear parabolic partial differential equation in $\vartheta$.
\pier{Nonlinear terms are present in all the three equations and also in the boundary condition \eqref{bound2}.} In this work, the terms we will accurately \pier{deal with} and that are more difficult to handle are the conductivity $\sigma(\vartheta)$ in the second equation \eqref{system2} and the right hand side $\sigma(\vartheta)\left|\nabla\phi\right|^2$ in \eqref{system3}: actually, \pier{the former makes a strong coupling for the potential $\phi$} and the latter would not be \pier{found} to have so much regularity.

We mention now about some related literature for microwave heating. In the 
contribution~\cite{Yin} the time-harmonic Maxwell equations with temperature dependent coefficients in a domain occupied by a conducting medium are coupled with the heat equation controlling the temperature distribution induced by the electric field: this is discussed as a model for microwave heating and the existence of a global solution of the coupled nonlinear system is shown. The paper~\cite{MYS} deals with a mathematical model coupling Maxwell's equations to the enthalpy formulation of the Stefan problem: under suitable conditions on the material properties, the model is proved to admit a global weak solution. An optimization problem for a microwave/induction heating process is studied in \cite{WYT}: the control variable is the applied electric field on the boundary and the cost function is shaped in order the temperature profile at the final time has a  relative uniform distribution in the field.  A reduced model for the microwave heating of a thin ceramic slab is investigated in \cite{AK} and steady-state solutions and their linear stability properties are discussed. 

In conclusion, we briefly outline the contents of our paper. 

\pier{In Section~\ref{result}} we arrive at the statement of the \pier{noteworthy} theorem, which ensures that the problem \eqref{system1}--\eqref{bound2} has a solution: in particular, we will write some general variational formulations for the equations and we will state the precise existence result.

In \pier{Section~\ref{approx}}, we will focus on the proof of the theorem: the idea is to use a delay argument in equation \eqref{system2} and consider a truncation \pier{of the term  $\sigma(\vartheta)\left|\nabla\phi\right|^2$} in \eqref{system3}. More precisely, given $\tau>0$, we will will prove the existence of a solution for the \pier{approximating} problem in each interval $[n\tau,(n+1)\tau]$ by using the result contained in \cite{scarpa} and a fixed point argument at each iteration. A solution for the \pier{approximating} problem will then be obtained
"pasting" together accurately all the solutions obtained in every single interval.

\pier{Section~\ref{estimat} contains the} uniform estimates on the solution of the \pier{approximating problem that are helpful for the limit procedure}. The idea here will be to test equation \eqref{system3} by suitable functions \pier{we will later introduce}.

Finally, \pier{Section~\ref{limit}} collects the arguments \pier{we use to pass to the 
limit in the \pier{approximating} problem in order to recover a solution
for the original one: the main tools here are} some compactness results.

%%%%%%%%%%%%%%%%%%%%%%%%%%%%%%%%%%%%%%%%%%%%%%%%%%%%%%%%%%%%%%%%%%%%%%%%%%%%

\section{The main result}
\setcounter{equation}{0}
\label{result}

In this section, we present the main existence theorem that will be proved in the \pier{paper}. 
Firstly, we introduce a general reformulation of problem \eqref{system1}--\eqref{bound2}.

\pier{We multiply \eqref{system2} by a test function $w \in \gH1$, with 
\beq
  \label{test}
  \gH1:=\{w\in\H1: \  w=0 \, \text{ on } \Gamma_b\}\,;
\eeq
then, taking into account \eqref{pier1} and integrating by parts,  we easily obtain
\beq
  \label{system2_bis_phi}
  \int_{\Omega}{\sigma(\vartheta(t))\nabla\phi(t)\cdot\nabla w\,dxdz}=0 
\eeq
for $t\in [0,T]$. Now,  in order to eliminate the presence of $V(t)$ in the boundary condition \eqref{bound1}, let us change the variable $\phi$ by introducing
\beq
  \label{psi}
  \psi(x,z,t) :=\phi(x,z,t) - \frac{z}{\pier{\ell}}V(t)\, , \quad (x,z,t) \in \Omega \times [0,T]\, ; 
\eeq
in this way, an easy computation shows that conditions \eqref{bound1}--\eqref{pier1} become
\beq
  \label{bound2_bis}
 \psi (t) =0 \, \text{ \pier{on} } \Gamma_b , 
  \quad \pier{\sigma(\vartheta(t) )\nabla\psi(t) \cdot{\bf n} }=0 \, 
  \text{ \pier{on} } 
  \Gamma_{\pier{\ell}}, \pier{\ \text{ for a.e. }  t\in[0,T]}\,,
\eeq
since the third component of $\bf n$ is null on the lateral boundary $\Gamma_{\pier{\ell}}$. 
Hence, the equality \eqref{system2_bis_phi} can be rewritten as
\beq
  \label{system2_bis_psi}
  \begin{split}
  \int_{\Omega}{\sigma(\vartheta(t))\nabla\psi(t)\cdot\nabla w\,dxdz}+
  \int_{\Omega}{\sigma(\vartheta(t))\frac{V(t)}{\pier{\ell}}\frac{\partial w}{\partial z}\,dxdz}=0\\ 
\text{for all } w\in \gH1\,, \  \text{for a.e. } t\in[0,T] \, .
  \end{split}
\eeq
Note that, for a fixed $t\in [0,T]$, if $\sigma(\vartheta(t))$  lies in $L^\infty (\Omega ) $ and is bounded from below by a positive constant, then definition~\eqref{test} and the Poincar\'e inequality allow us to conclude that
$$
  w \mapsto \int_{\Omega}{\sigma(\vartheta(t))|\nabla w|^2\,dxdz}
$$
yields an equivalent norm in  $\gH1$. Within this framework, 
it is not difficult to check that the Lax-Milgram lemma 
implies the existence of a unique $\psi(t)$ solving the variational equality \eqref{system2_bis_psi}}.%

 \pier{Next, as we have anticipated, let us explain why equation \eqref{system1} makes sense by checking \eqref{z_der}.  Letting $\zeta\in {\cal D}(0,\pier{\ell})$ and 
choosing $w(x,z)=\zeta(z)$, $x\in B$ and $z\in(0,\pier{\ell})$, in  \eqref{system2_bis_phi},  an easy calculation shows that
  \[
    \int_0^l\int_B{\sigma(\vartheta(t))\frac{\partial\phi}{\partial z}(t)\frac{\partial\zeta}{\partial z}\,dxdz}=0\,.
  \]
Therefore,  using the Fubini-Tonelli theorems and integrating by parts lead to 
\beq
    -\int_0^l{\frac{d}{dz}\bigl(\int_B{\sigma(\vartheta(t))\frac{\partial\phi}{\partial z}(t)\,dx}\bigr)}\zeta\,dz=0 \quad \forall\,\zeta\in {\cal D}(0,\pier{\ell})\,,
     \label{no_zdep'} 
\eeq
from which condition \eqref{z_der} follows. Of course, in the above argument, $t\in [0,T]$ is fixed.}

\pier{Now, let us integrate equation \eqref{system1} with respect to time and renominate the constants; in view of \eqref{init1}, we obtain
\beq
\begin{split}
  \label{system1_bis}
  \lambda_1V'(t)&+\lambda_2V(t)+\lambda_3\int_0^t{V(r)\,dr}=
  \lambda_1V'_0+\lambda_2V_0\\
  &-\int_B{\sigma(\vartheta(t))\frac{\partial\phi}{\partial z}(t)\,dx}+
  \int_B{\sigma(\vartheta_0)\frac{\partial\phi_0}{\partial z}\,dx}+\int_0^t{f(r)\,dr} \quad \forall\, t\in[0,T]\,,
\end{split}
\eeq
where $\lambda_1,\, \lambda_2,\, \lambda_3>0$ substitute $C, \, 1/R, \, 1/L$, respectively. Please note that in \eqref{system1_bis} $\phi_0$ denotes the element 
$$
\phi_0(x,z) = \psi_0(x,z) + V_0 \frac z\ell , \quad  (x,z)\in \Omega\, ,
$$
where $ \psi_0 \in \gH1$ is the solution of \eqref{system2_bis_psi} corresponding to $t=0$, that is,
\beq
\label{pier2}
 \begin{split}
  \int_{\Omega}{\sigma(\vartheta_0) \nabla \psi_0 \cdot\nabla w\,dxdz}
  +
  \int_{\Omega}{\sigma(\vartheta_0)\frac{V_0}{\pier{\ell}}\frac{\partial w}{\partial z}\,dxdz}
  =0 \ 
\text{ for all } w\in \gH1\, ,
  \end{split}
\eeq
and $\phi_0, \,  \psi_0$ may be considered as auxiliary initial values.}

It is now time to set some assumptions. About the functions $\sigma$ and $k$, we
assume that
\beq
  \label{bound_sig_k}\sigma, \, k \in C^0 (\Ar) , \quad 
 0 < \sigma_*\leq\sigma(r) \leq\sigma^* \, \hbox{ and }\,  0< k_*\leq k(r) \leq k^*\quad \forall \, r\in \Ar\,,
\eeq
for some positive constants  $\sigma_*,\, \sigma^*,\, k_*,\, k^* $. Note that, although these \luca{functions} work in principle only on $(0,+\infty) $ due to the physical meaning of the variable $\vartheta$, in case they can be easily extended with reasonable values to the whole of $\Ar$.  About the initial data, we let 
\beq 
\label{pier3}
    V_0, V'_0\in\Ar\,, \quad \vartheta_0\in\L2
\eeq    
so that     
\beq 
\label{pier4}
    \psi_0 \in \gH1 \, \hbox{ and } \, \phi_0 \in \H1 \ \hbox{ are well defined}. 
\eeq

At this point, let us comment on equation \eqref{system3} and introduce
\beq
  \label{K}
  K:\Ar\rarr\Ar\,, \quad K(r)=\int_0^r{k(\rho)\,d\rho}, \ \, r\in \Ar \, ;
\eeq
due to \eqref{bound_sig_k},  $K\in C^1(\Ar) $ is a bi-Lipschitz continuous function. Then,
\beq
  \label{gamma}
  \gamma:=K^{-1}:\Ar\rarr\Ar\, \hbox{ is Lipschitz continuous along with its inverse function} 
\eeq
and  equation \eqref{system3} can be rewritten as
\beq
  \label{system3_bis}
  \vartheta_t-\Delta K(\vartheta)= \sigma(\vartheta) |\nabla\phi |^2  \quad\text{in } Q\,,
\eeq
where we have taken $c_0=1$ to simplify the notation. A variational formulation of 
\eqref{system3_bis} can be easily obtained by multiplying it by a test function $w$ and 
integrating by parts on account of the boundary condition \eqref{bound2}. In particular, if we introduce the auxiliary variable
\beq
  \label{u}
  u=K(\vartheta)\,,
\eeq
let 
\beq
  \label{beta}
  \beta(r)=h(\gamma(r)), \, \ r\in \Ar, 
\eeq
and set
\beq
  \label{h_gamma}
  u_{\Gamma}:=K(\vartheta_{\Gamma})\,, \, \ h_\Gamma:=\beta(u_{\Gamma}) \quad \hbox{ in } \Gamma_\ell \times [0,T]\, , 
\eeq
we arrive at the following formulation in terms of $u$:
\beq
  \label{system3_bis_2}
  \begin{split}
  \int_{\Omega}{\frac{\partial\gamma(u)}{\partial t}(t)w\,\pier{dv}}+\int_{\Omega}{\nabla u(t)\cdot\nabla w\,\pier{dv}}+\int_{\Gamma_{\pier{\ell}}}{\beta(u(t))w\,ds}\\
  =\int_{\Omega}{\sigma(\gamma(u)(t) ) |\nabla\phi(t) |^2 w\,\pier{dv}}+\int_{\Gamma_{\pier{\ell}}}{h_{\Gamma}(t)w\,ds} \\ \forall\, w\in W^{1,\infty} (\Omega)\,,\; \text{for a.e. } t\in[0,T]\,.
  \end{split}
\eeq
Please note that, here and in the sequel,  we use the notation ``$\pier{dv}$'' 
(in place of the heavy $dxdz$) for the volume element in the integrals over $\Omega$ and 
``$ds$'' for the surface element in the integrals on the boundaries of $\Omega$. We also observe that in \eqref{system3_bis_2} the test functions $w$ are taken in the smoother space $ W^{1,\infty} (\Omega)$ in order to try to give a meaning to all the integrals, in particular to the first one on the right hand side. Actually, we can set a larger space for the test functions, as it is pointed out in the next statement. 

Hence, we are ready to present the main result of the paper, which ensures that the problem we are dealing with actually has a solution $(V,\phi,\vartheta)$: moreover, it specifies which spaces are considered and in which sense the solution is intended.

\begin{thm}
  \label{theorem}
  Assume that \eqref{bound_sig_k}--\eqref{gamma}, \eqref{beta}--\eqref{h_gamma} and 
  \begin{gather}
    \label{data1}
   f\in L^1(0,T) ,\quad \vartheta_{\Gamma}\in L^2(\Sigma_\ell ),\\
    \label{data2}
   h,\,  \beta: \Ar\rarr\Ar \text{ continuous and increasing}\,, \quad h(0)= \beta(0)=0
  \end{gather}
  hold. Moreover, there are two constants $\vartheta_*>0$ and  $C_\beta >0$ such that
\begin{gather}
    \label{infess}
    \vartheta_0 \geq  \vartheta_* \, \hbox{ a.e. in } \Omega, \quad 
    \vartheta_{\Gamma}\geq\vartheta_*  \quad \text{a.e. in } \Sigma_\ell  \,,
\\
    \label{beta_lin}
    \left|\beta (r)\right|\leq C_\beta \bigl(1+\min\left\{\widehat{\beta}(r),\left|r\right|\right\}\bigr) \quad \forall\, r\in\Ar\,,
\end{gather}
  where $\widehat{\beta}$ is the (convex and $C^1$) primitive of $\beta$ such that $\widehat{\beta}(0)=0$.
  Then, there exists a quintuplet $(V,\phi, \psi, \vartheta, u)$ such that
  \begin{gather}
    \label{sol}
    V\in C^{0,1}\bigl([0,T]\bigr), \quad \phi\in L^\infty\bigl(0,T;\H1\bigr), 
    \quad \psi\in L^\infty\bigl(0,T;\gH1\bigr), \\
    \label{sol'}
    \vartheta\in W^{1,p}\bigl(0,T;W^{1,q}(\Omega)'\bigr), \quad \vartheta, \, u \in  L^p(0,T;W^{1,p}(\Omega))
  \end{gather}
  for some conjugate exponents $p\in (1, 5/4)$ and $q\in (5, +\infty) $, $\frac1p +\frac1q=1$,  and satisfying
  \beq
    \begin{split}
      \lambda_1V'(t)+\lambda_2V(t)+\lambda_3\int_0^t{V(r)\,dr}=
      \lambda_1V'_0+\lambda_2V_0-\int_B{\sigma(\vartheta(t))\frac{\partial\phi}{\partial z}(t)\,dx}\\
      \label{1}
      + \int_B{\sigma(\vartheta_0)\frac{\partial\phi_0}{\partial z}\,dx}+\int_0^t{f(r)\,dr}\,,
\end{split}
\eeq
\beq
    \label{2}
    \int_{\Omega}{\sigma(\vartheta(t))\nabla\psi(t)\cdot\nabla w\,\pier{dv}}+
    \int_{\Omega}{\sigma(\vartheta(t))\frac{V(t)}{\pier{\ell}}\frac{\partial w}{\partial z}\,\pier{dv}}=0 \quad
    \forall\, w\in \gH1\,, 
\eeq
\beq
\begin{split}
   &\int_{\Omega}{\frac{\partial\vartheta}{\partial t}(t)w\,\pier{dv}}+\int_{\Omega}{\nabla u(t)\cdot\nabla w\,\pier{dv}}+\int_{\Gamma_{\pier{\ell}}}{\beta(u(t))w\,ds}\\
    \label{3}
    &{}=\int_{\Omega}{\sigma(\vartheta(t))\left|\nabla\phi(t)\right|^2w\,\pier{dv}}+\int_{\Gamma_{\pier{\ell}}}{h_{\Gamma}(t)w\,ds} \quad\forall\, w\in W^{1,q}(\Omega)\, ,
    \end{split}
    \eeq
for almost every $t\in[0,T]$ and      
    \begin{gather}
    \label{psi_u}
    \phi(x,z,t) =\psi(x,z,t) + \frac{z}{\pier{\ell}}V(t)  
    \quad \hbox{for a.e. } (x,z,t) \in Q\, ,
\\
\label{pier5}
     \vartheta = \gamma (u) \quad \hbox{a.e. in } \, Q, 
 \\[0.1cm]
    \label{init}
    V(0)=V_0\,, \quad \vartheta(0)=\vartheta_0\,.
    \end{gather}
\end{thm}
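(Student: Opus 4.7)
The plan is to build a solution by an approximation-and-compactness scheme, following the strategy anticipated in the introduction. Fixing a step size $\tau>0$, I would introduce a time-delay regularization: on each slab $[n\tau,(n+1)\tau]$, the conductivity appearing in \eqref{2} is frozen to $\sigma(\vartheta_\tau(t-\tau))$, where $\vartheta_\tau$ is already known on the previous slab; simultaneously, the ohmic heating term on the right-hand side of \eqref{3} is replaced by a bounded truncation. With the elliptic coefficient frozen, the Lax--Milgram argument noted after \eqref{system2_bis_psi} uniquely yields $\psi_\tau(t)$, whence $\phi_\tau=\psi_\tau+(z/\ell)V_\tau$ is determined as soon as $V_\tau$ is. On each slab, the remaining coupling between $V_\tau$ (governed by \eqref{1}, essentially an integro-differential equation that is Lipschitz in $V$ once $\phi_\tau$ is controlled) and $\vartheta_\tau$ (governed by the truncated version of \eqref{3}, solvable by the result in \cite{scarpa}) is closed via a Schauder or Banach fixed-point argument. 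Concatenating slab by slab yields an approximate quintuplet $(V_\tau,\phi_\tau,\psi_\tau,\vartheta_\tau,u_\tau)$ defined on $[0,T]$.

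The core difficulty is the uniform-in-$\tau$ estimates. Since the right-hand side of \eqref{3} sits a priori only in $L^1(Q)$, the standard energy testing is not available; instead, one tests the temperature equation by a constant and by a singular function of $\vartheta_\tau$ of the type $-1/\vartheta_\tau$, leveraging the positivity $\vartheta_\tau\ge\vartheta_*>0$ which follows from \eqref{infess} by testing with the negative part. This extracts from the heating term an $L^2$-type control on $\nabla u_\tau/\vartheta_\tau$. Together with a Meyers-type higher integrability of $\nabla\phi_\tau$, which comes from elliptic regularity for \eqref{2} with bounded measurable coefficient $\sigma(\vartheta_\tau)$, this delivers the $L^p(0,T;W^{1,p}(\Omega))$ bound on $\vartheta_\tau$ and $u_\tau$ for the conjugate exponents $(p,q)$ stated in the theorem. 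The boundary contribution is handled by the growth condition \eqref{beta_lin}, which is designed precisely so that $\int\beta(u_\tau)u_\tau$ dominates $\int\widehat\beta(u_\tau)$. Finally, testing \eqref{1} after multiplication by $V_\tau'$ yields the Lipschitz bound on $V_\tau$ recorded in \eqref{sol}.

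Passing to the limit $\tau\to 0$ then relies on Aubin--Lions compactness for $\vartheta_\tau$ in $L^p(Q)$, weak-$\ast$ compactness for $V_\tau$ in $W^{1,\infty}(0,T)$, and weak $L^\infty(0,T;H^1)$ compactness for $\phi_\tau$ and $\psi_\tau$. Continuity of $\sigma$ together with strong convergence $\vartheta_\tau\to\vartheta$ identifies the elliptic equation \eqref{2} in the limit and, by a standard energy identity for \eqref{2}, upgrades the convergence of $\sqrt{\sigma(\vartheta_\tau)}\,\nabla\phi_\tau$ from weak to strong in $L^2$. This is the most delicate step: the quadratic term on the right-hand side of \eqref{3} then passes to the limit in $L^1$, while the monotone boundary term $\beta(u_\tau)$ is treated by the standard Minty argument in view of \eqref{data2}. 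The initial values \eqref{init} are recovered from the $W^{1,p}(0,T;W^{1,q}(\Omega)')$ regularity of $\vartheta$ and the continuity of $V$, and \eqref{psi_u}--\eqref{pier5} follow from the definition of the approximating variables.
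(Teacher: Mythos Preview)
Your global architecture matches the paper's: delay parameter $\tau$, truncation of the ohmic term, a Banach fixed point on each slab for $(V_\tau,\psi_\tau,\phi_\tau)$, the result in \cite{scarpa} for $(\vartheta_\tau,u_\tau)$, followed by uniform estimates and an Aubin--Lions / strong-$L^2$-gradient passage to the limit. The paper also gets the strong convergence of $\nabla\phi_\tau$ by exactly the energy-difference trick you mention, and identifies the boundary term via the a.e.\ convergence of traces together with the growth bound \eqref{beta_lin} (so a Minty argument is unnecessary, though it would also work).

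Where your proposal diverges, and where I believe there is a real gap, is the key regularity estimate for $\vartheta_\tau,u_\tau$. Testing \eqref{3'} by $-1/\vartheta_\tau$ only produces an $L^2(Q)$ bound on $\nabla\log\vartheta_\tau$; combined with the $L^\infty(0,T;L^1(\Omega))$ bound, this does not by itself yield $\vartheta_\tau\in L^p(0,T;W^{1,p}(\Omega))$ for any $p>1$. Invoking a Meyers-type improvement for $\nabla\phi_\tau$ does not help here either: even if one had $|\nabla\phi_\tau|^2\in L^{1+\delta}$, that is still far from $L^2$, so the standard parabolic energy test remains unavailable; moreover, Meyers estimates for the mixed Dirichlet/Neumann problem on the cylinder are not assumed and are not used anywhere in the paper. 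The paper instead follows \cite{fpr} and tests \eqref{3'} by the bounded function
\[
w \;=\; 1-\frac{1}{(1+u_\tau)^{1-\alpha}},\qquad \alpha\in(0,1),
\]
which gives $\nabla\bigl((1+u_\tau)^{\alpha/2}\bigr)\in L^2(Q)$; interpolating this with $u_\tau\in L^\infty(0,T;L^1(\Omega))$ and the Sobolev embedding in three dimensions yields $u_\tau,\vartheta_\tau\in L^p(0,T;W^{1,p}(\Omega))$ with $p=(2+3\alpha)/4$, and the range $\alpha\in(2/3,1)$ is exactly what forces $p\in(1,5/4)$ and $q=p'\in(5,\infty)$. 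Your $-1/\vartheta_\tau$ test is morally the limiting case $\alpha\to 0$, which falls outside this admissible range. A smaller point: the $W^{1,\infty}(0,T)$ bound on $V_\tau$ in the paper comes from testing \eqref{1'} by $V_\tau$ (not $V_\tau'$) and Gronwall, followed by comparison in the equation.
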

\begin{rmk}
  We observe that the initial condition for $\vartheta$ in \eqref{init} makes sense at least in 
  $W^{1,q}(\Omega)'$ due to \eqref{sol'}, \eqref{pier3} and the inclusion $L^2(\Omega )\subset W^{1,q}(\Omega)'  $. Moreover, as $q>3$, let us point out that, owing to the Sobolev embedding results, not only  $L^2(\Omega ) $ but $L^1(\Omega )$ is continuosly embedded into $W^{1,q}(\Omega)'  $.
\end{rmk}
\begin{rmk}
  Please note that condition \eqref{beta_lin} is a priori very restrictive as a growth condition for $\beta$.  However, hypothesis \eqref{beta_lin} can be considered acceptable if we bear in
  mind our application: the input of the function $\beta$ is $u$, i.e. the temperature (modified through the Lipshitz-operator $K$), so that \eqref{beta_lin} trivially holds true if we confine our system to the case in which the temperature assumes values in a bounded 
  interval. This is cleary acceptable from the physical interpretation: for example, we can deal with 
  all situations in which the temperature does not exceed an arbitrary high level. Thus, this apparently restrictive mathematical hypothesis does not affect
  the interpretation of the problem.
\end{rmk}
\begin{rmk}
Assumption \eqref{infess} establishes a positive bound from below for the initial and some boundary values of the absolute temperature, which is completely reasonable and will help us to infer the same bound for the variable $\vartheta$. 
About condition \eqref{beta_lin}, we would like to comment also on the 
right hand side of the inequality:  by requiring that the growth of $\beta$ 
is controlled by a linearly growing function, 
we can find some useful estimate for suitable norms of $\frac{\partial \vartheta}{\partial t}$ in order to pass 
to the limit; furthermore, the application of the result of \cite{scarpa} during the delay argument
needs that $| \beta | $ is somehow controlled by $\widehat{\beta}$ plus a constant.
% These conditions are absolutely acceptable from a mathematical point of view.
\end{rmk}

%%%%%%%%%%%%%%%%%%%%%%%%%%%%%%%%%%%%%%%%%%%%%%%%%%%%%%%%%%%%%%%%%%%%%%%%%%%%

\section{Approximating the problem}
\setcounter{equation}{0}
\label{approx}

As we have anticipated, the idea is to prove Theorem \ref{theorem} using a delay argument, that we will describe in this section.
Firstly, let us introduce the delay parameter $\tau \in (0,T) $ and define the 
value of $\vartheta$ also for negative times in the following way:
\beq
\label{primadi0}
  \vartheta(t):=\vartheta_0 \quad\text{for all } t<0\,.
\eeq
Let now focus on the interval $[0,\tau]$ and consider the system 
\eqref{1}--\eqref{3}, in which we introduce 
some delay terms involving $\vartheta(t-\tau )$ and a 
truncation in $|\nabla\phi(t)|^2$ : more precisely, we use
\[
  \vartheta(t-\tau)  \,\text{ instead of } \, \vartheta(t)\, \text{ in \eqref{1}, \eqref{2} and the right hand side of \eqref{3}}\,,
\]
\[
  T_\tau\bigl(\left|\nabla\phi(t)\right|^2\bigr) \, \text{ instead of } \,  \left|\nabla\phi(t)\right|^2\,\text{ in \eqref{3}}\, ,
\]
where $T_\tau$ is the truncation operator
\beq
  \label{trunc}
  T_\tau(r)=
  \begin{cases}
      - {1}/{\tau} \quad&\text{if \ $r<-{1}/{\tau}$}\\
    r \quad&\text{if \ $\left|r\right|\leq{1}/{\tau}$}\\
    {1}/{\tau} \quad&\text{if \ $r>{1}/{\tau}$}
  \end{cases}
  \,,\qquad r\in\Ar\,.
\eeq
In other words, we consider the approximated system
\beq
 \begin{split}
      \lambda_1V'(t)+\lambda_2V(t)+\lambda_3\int_0^t{V(r)\,dr}=
      \lambda_1V'_0+\lambda_2V_0-\int_B{\sigma(\vartheta(t-\tau))\frac{\partial\phi}{\partial z}(t)\,dx}\\
      \label{1app}
      + \int_B{\sigma(\vartheta_0)\frac{\partial\phi_0}{\partial z}\,dx}+\int_0^t{f(r)\,dr}\,,
\end{split}
\eeq
\beq      
    \label{2app}
    \int_{\Omega}{\sigma(\vartheta(t-\tau))\nabla\psi(t)\cdot\nabla w\,\pier{dv}}+
    \int_{\Omega}{\sigma(\vartheta(t-\tau))\frac{V(t)}{\pier{\ell}}\frac{\partial w}{\partial z}\,\pier{dv}}=0 \quad
    \forall\, w\in \gH1\,,
\eeq
\beq
\begin{split}
    &\int_{\Omega}{\frac{\partial\vartheta}{\partial t}(t)w\,\pier{dv}}+\int_{\Omega}{\nabla u(t)\cdot\nabla w\,\pier{dv}}+\int_{\Gamma_{\pier{\ell}}}{\beta(u(t))w\,ds}\\
    \label{3app}
    &=\int_{\Omega}{\sigma(\vartheta(t-\tau))T_\tau\bigl(\left|\nabla\phi(t)\right|^2\bigr)w\,\pier{dv}}
    +\int_{\Gamma_{\pier{\ell}}}{h_{\Gamma}(t)w\,ds} \quad\forall\, w\in W^{1,q}(\Omega)\, ,
\end{split}
\eeq
\beq
    \label{psi_u_app}
    \phi(x,z,t) =\psi(x,z,t) + \frac{z}{\pier{\ell}}V(t)  
    \quad \hbox{for a.e. } (x,z) \in \Omega\, ,
\eeq
for almost every $t\in[0,\tau]$ and      
    \begin{gather}
\label{pier5_app}
     \vartheta = \gamma (u) \quad \hbox{a.e. in } \, \Omega \times [0,\tau], 
 \\[0.1cm]
    \label{init_app}
    V(0)=V_0\,, \quad \vartheta(0)=\vartheta_0\,.
    \end{gather}
Thus, we look for a solution on the interval $[0,\tau]$.

The idea at this level is to use a fixed point argument: let us explain in a first intuitive approach how we will proceed. We fix a suitable $\overline{V}$ in equation \eqref{2app}, which becomes 
in this way an explicit elliptic equation; we solve it and recover the solution $\psi$. From \eqref{psi_u_app}, with $\overline{V}$ in place of $V$, we are able to find the corresponding $\phi$
and substitute it in equation \eqref{1app}: thus, this one becomes a linear ordinary differential equation with known terms on the right hand side. Then, if we consider the Cauchy problem 
given by \eqref{1app} and the initial condition in  \eqref{init_app}, we obtain a unique
solution $V$ in $[0,\tau]$. Now, by considering the application mapping  $\overline{V}$ 
into $V$, we will deduce some contraction estimates provided $\tau$ is sufficiently small. Consequently, if the interval $[0,\tau]$ is suitably chosen, then there is a unique fixed point $V$ that solves the system involving \eqref{1app}, \eqref{2app}. Of course, the corresponding $\psi$ and $\phi$ can also be determined  
easily from  \eqref{2app} and \eqref{psi_u_app}. 
Once we have found the triplet $(V,\phi,\psi)$, we can check that the result contained in \cite{scarpa} applies to equation~\eqref{3app}, complemented by the relation \eqref{pier5_app} and the initial condition in \eqref{init_app}; moreover, since the terms on the right hand side of \eqref{3app} are smoother, in particular $\sigma(\vartheta(\, \cdot\,  -\tau))T_\tau\bigl(\left|\nabla\phi \right|^2\bigr)$ is uniformly bounded, there will be a solution pair $(\vartheta, u)$ 
with $\vartheta $ continuous from $[0,\tau]$ to $L^2(\Omega)$. 

At this point, the idea is to focus on the interval $[\tau,2\tau]$ and repeat the same argument used in $[0,\tau]$ for the new interval, with the new initial values
$V(\tau), \, \vartheta(\tau)$ and with $\vartheta (\, \cdot\, - \tau)$ specified by the solution  component $\vartheta$ found at the previous step. Note that in this case \eqref{1app} should be rewritten as
\beq
 \begin{split}
      \lambda_1V'(t)+\lambda_2V(t)+\lambda_3\int_\tau^t{V(r)\,dr}=
      \lambda_1V'_0+\lambda_2V_0 - \lambda_3 \int_0^\tau {V(r)\,dr}
      \\
     \nonumber
      -\int_B{\sigma(\vartheta(t-\tau))\frac{\partial\phi}{\partial z}(t)\,dx}+ \int_B{\sigma(\vartheta_0)\frac{\partial\phi_0}{\partial z}\,dx}+\int_0^t{f(r)\,dr}\,,
\end{split}
\eeq
for $t\in [\tau, 2\tau]$, and the term $ - \lambda_3 \int_0^\tau {V(r)\,dr}$ on the right hand side is now a datum 
since the solution $V$ has been already found from the previous iteration.
Then, one should  
continue in this way for every interval $[n\tau,(n+1)\tau]$, with $n$ going from $2$ to 
a value $N$ with $(N+1) \tau \geq T$. Hence, if it is possible to correctly {\em paste} 
together the solutions at each interval, then we will find a solution 
of the approximated problem in the whole interval $[0,T]$. 
The most important thing is that we have to check that the contraction estimates 
used in the fixed point argument at each iteration do not depend on the specific 
interval we consider, or, in other words, that there exists some $\tau >0$, small enough and independent of the sub-intervals, such that the contraction estimates hold.

Let us now present the proof of the existence of a solution for the \pier{approximating} problem. The key of the argument is the following lemma, which shows that some contraction estimates hold for a suitable value of $\tau$ on each sub-interval.
\begin{lem}
  \label{lemma}
  Let $\rho \in [0,T]$ and $\tau>0 $ such that $\rho +\tau \leq T$.
Given   $\overline{V}\in C^0([0,\rho+\tau])$ and
$ \overline{\vartheta} \in C^0\bigl([\rho,\rho+\tau ];\L2\bigr)$,
  there exists a unique triplet $(V,\phi, \psi)$ satisfying 
   \begin{gather}
    \label{V_psi}
    V\in C^{1}\bigl([\rho,\rho+\tau]\bigr), \quad \phi\in C^0\bigl([\rho,\rho+\tau];\H1\bigr), \quad \psi\in C^0\bigl([\rho,\rho+\tau];\gH1\bigr)
  \end{gather}
and solving the problem
\beq
 \begin{split}
      \lambda_1V'(t)+\lambda_2V(t)+\lambda_3\int_{\rho}^t{V(r)\,dr}=
      \lambda_1V'_0+\lambda_2V_0 - \lambda_3\int_0^{\rho}{\overline{V}(r)\,dr}
     \\
      \label{1lem}
       -\int_B{\sigma(\overline{\vartheta}(t))\frac{\partial\phi}{\partial z}(t)\,dx} 
       + \int_B{\sigma(\vartheta_0)\frac{\partial\phi_0}{\partial z}\,dx}+\int_0^t{f(r)\,dr}\,,
\end{split} 
\eeq      
\beq
    \label{2lem}
    \int_{\Omega}{\sigma(\overline{\vartheta}(t))\nabla\psi(t)\cdot\nabla w\,\pier{dv}}+
    \int_{\Omega}{\sigma(\overline{\vartheta}(t))\frac{\overline{V}(t)}{\pier{\ell}}\frac{\partial w}{\partial z}\,\pier{dv}}=0 \quad
    \forall\, w\in \gH1 \, ,
\eeq   
\beq
\label{psi_lem}
    \phi(x,z,t) =\psi(x,z,t) + \frac{z}{\pier{\ell}}\overline{V} (t)  
    \quad \hbox{for a.e. } (x,z) \in \Omega\, ,
\eeq
\noindent for all $t\in[\rho,\rho+\tau] $, and      
    \begin{gather}
    \label{init_lem}
    V(\rho)=\overline{V} (\rho) .
    \end{gather} 
  Furthermore, let 
 $\overline{V}_1, \overline{V}_2\in C^0([0,\rho+\tau])$ 
 satisfy $\overline{V}_1(t)=\overline{V}_2(t)$ for all $t\in[0,\rho]$
  and let $V_1, V_2$ denote the corresponding solution 
  components of the problem  \eqref{V_psi}--\eqref{init_lem}. Then, 
  the following estimate holds:
  \begin{gather}
%    \label{contr1}
%  \left|V_1(t)-V_2(t)\right|\leq
%   \frac{4}{\lambda_1\lambda_2}\bigl(\frac{\sigma^*\left|B\right|}{\pier{\ell}}\bigr)^2
%    \int_{\rho}^{\rho+\tau}{\left|\overline{V}_1(r)-\overline{V}_2(r)\right|\,dr} \quad 
%\forall\, t\in[\rho,\rho+\tau]\,,\\
    \label{contr2}
    \l|V_1-V_2\r|_{L^\infty(\rho,\rho+\tau)}\leq
    \sqrt{\frac{\tau}{\lambda_1\lambda_2}}\, \frac{2\sigma^*\left|B\right|}{\pier{\ell}}
    \l|\overline{V}_1-\overline{V}_2\r|_{L^\infty(\rho,\rho+\tau)}\, ,
  \end{gather}
 where $|B|$ denotes the bidimensional measure of the set $B$ in \eqref{omega}.
\end{lem}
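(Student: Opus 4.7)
My plan is to proceed in three steps: construction of $\psi,\phi$, construction of $V$, and the contraction estimate.  For each fixed $t\in[\rho,\rho+\tau]$ I regard \eqref{2lem} as a linear elliptic variational problem for $\psi(t) \in \gH1$ with diffusion coefficient $\sigma(\overline{\vartheta}(t))\in L^\infty(\Omega)$ bounded between $\sigma_*$ and $\sigma^*$ thanks to \eqref{bound_sig_k}.  As already pointed out in the discussion following \eqref{system2_bis_psi}, via \eqref{test} and the Poincar\'e inequality the bilinear form is coercive on $\gH1$, so the Lax--Milgram lemma delivers a unique $\psi(t)$ with $\|\psi(t)\|_{\H1}\leq C_0\,|\overline{V}(t)|$.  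Continuity of the maps $\overline{V}\colon[0,\rho+\tau]\to\Ar$ and $\overline{\vartheta}\colon[\rho,\rho+\tau]\to\L2$, together with the continuity of the Nemytskii operator $v\mapsto\sigma(v)$ and the standard stability estimates for Lax--Milgram, then yields $\psi\in C^0([\rho,\rho+\tau];\gH1)$; setting $\phi:=\psi+\frac{z}{\ell}\overline{V}$ produces a $\phi\in C^0([\rho,\rho+\tau];\H1)$ verifying \eqref{psi_lem}.  With $\phi$ at hand, \eqref{1lem} reduces to the linear Volterra integro-differential equation
\[
  \lambda_1 V'(t)+\lambda_2 V(t)+\lambda_3\int_\rho^t V(r)\,dr=R(t),\qquad V(\rho)=\overline{V}(\rho),
\]
with $R$ continuous on $[\rho,\rho+\tau]$; a Picard iteration provides a unique $V\in C^1([\rho,\rho+\tau])$, and uniqueness of $\psi$ and $\phi$ descends from that of $\psi(t)$ pointwise in $t$.

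For the contraction estimate, suppose $\overline{V}_1\equiv\overline{V}_2$ on $[0,\rho]$ and denote by $V_i,\psi_i,\phi_i$ the corresponding triplets, setting $\delta V:=V_1-V_2$, $\delta\overline{V}:=\overline{V}_1-\overline{V}_2$ (which vanishes at $t=\rho$), $\delta\psi:=\psi_1-\psi_2$ and $\delta\phi:=\phi_1-\phi_2$.  The common ingredients $\overline{\vartheta}$, $\vartheta_0$, $\phi_0$, $f$ and $\int_0^\rho\overline{V}\,dr$ cancel in the difference of \eqref{1lem}, leaving
\[
  \lambda_1\delta V'(t)+\lambda_2\delta V(t)+\lambda_3\int_\rho^t\delta V(r)\,dr=-J(t),\qquad \delta V(\rho)=0,
\]
where $J(t):=\int_B\sigma(\overline{\vartheta}(t))\,\partial_z\delta\phi(t)\,dx$ depends on $t$ only, by the argument leading to \eqref{z_der}.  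Multiplying by $\delta V(t)$, integrating from $\rho$ to $s\in[\rho,\rho+\tau]$, observing that the $\lambda_3$-contribution equals $\tfrac12(\int_\rho^s\delta V\,dr)^2\geq 0$, and using Young's inequality $|\int_\rho^s J\delta V\,dr|\leq\lambda_2\|\delta V\|_{L^2(\rho,s)}^2+\tfrac{1}{4\lambda_2}\|J\|_{L^2(\rho,s)}^2$, I get
\[
  |\delta V(s)|^2\leq\tfrac{1}{2\lambda_1\lambda_2}\|J\|_{L^2(\rho,\rho+\tau)}^2\leq\tfrac{\tau}{2\lambda_1\lambda_2}\|J\|_{L^\infty(\rho,\rho+\tau)}^2.
\]

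It then remains to bound $\|J\|_\infty$ in terms of $\|\delta\overline{V}\|_\infty$.  Subtracting the two copies of \eqref{2lem} (the coefficient $\sigma(\overline{\vartheta}(t))$ is common), testing with $w=\delta\psi(t)\in\gH1$, and rewriting $\nabla\delta\psi=\nabla\delta\phi-\tfrac{\delta\overline{V}(t)}{\ell}e_z$ produces, via the identity $\ell J(t)=\int_\Omega\sigma(\overline{\vartheta})\partial_z\delta\phi\,\pier{dv}$ inherited from the $z$-invariance of $J$,
\[
  \int_\Omega\sigma(\overline{\vartheta}(t))\,|\nabla\delta\phi(t)|^2\,\pier{dv}=\delta\overline{V}(t)\,J(t)\geq 0.
\]
A $\sigma$-weighted Cauchy--Schwarz inequality applied to the identity above gives
\[
  (\ell J(t))^2=\Bigl(\int_\Omega\sigma\cdot 1\cdot\partial_z\delta\phi\,\pier{dv}\Bigr)^2\leq\Bigl(\int_\Omega\sigma\,\pier{dv}\Bigr)\Bigl(\int_\Omega\sigma(\partial_z\delta\phi)^2\,\pier{dv}\Bigr)\leq\sigma^*|\Omega|\,J(t)\delta\overline{V}(t),
\]
so, using $|\Omega|=\ell|B|$, one obtains $|J(t)|\leq\frac{\sigma^*|B|}{\ell}|\delta\overline{V}(t)|$ and hence $\|J\|_\infty\leq\frac{\sigma^*|B|}{\ell}\|\delta\overline{V}\|_\infty$.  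Plugged into the preceding estimate, this yields $\|\delta V\|_{L^\infty(\rho,\rho+\tau)}\leq\sqrt{\tfrac{\tau}{2\lambda_1\lambda_2}}\,\frac{\sigma^*|B|}{\ell}\,\|\delta\overline{V}\|_{L^\infty(\rho,\rho+\tau)}$, which is strictly stronger than \eqref{contr2} and so establishes it.  The step I expect to be most delicate is precisely the elliptic estimate on $J$: a naive Lax--Milgram bound on $\|\nabla\delta\psi\|_{L^2}$ introduces an unwanted factor $\sigma^*/\sigma_*$, and the clean $\sigma_*$-free constant is recovered only by exploiting the $z$-invariance of the current together with the weighted Cauchy--Schwarz trick, which bypasses any coercivity constant.
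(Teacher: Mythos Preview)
Your proof is correct and, for the contraction estimate, follows a sharper route than the paper's.  The existence/uniqueness part is essentially identical: Lax--Milgram for $\psi(t)$, then $\phi$ via \eqref{psi_lem}, then the linear integro-differential ODE for $V$.  (Both you and the paper are somewhat brief on why $t\mapsto\psi(t)$ is continuous into $\gH1$; strictly speaking one argues via subsequences that $\sigma(\overline{\vartheta}(t_n))\to\sigma(\overline{\vartheta}(t))$ a.e.\ and in every $L^p$, $p<\infty$, and then uses a standard perturbation estimate for the elliptic problem.)

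For the contraction part, the paper multiplies the difference of \eqref{1lem} by $\delta V$, applies Young with weight $\lambda_2/2$, then bounds $\bigl(\int_B\sigma\,\partial_z\delta\phi\,dx\bigr)^2$ by H\"older and splits $|\nabla\delta\phi|^2\le 2|\nabla\delta\psi|^2+2|\delta\overline V|^2/\ell^2$ together with the energy inequality \eqref{est_psi_V}; the two crude factors of $2$ from this splitting produce the constant $2\sigma^*|B|/\ell$ in \eqref{contr2}.  You instead test the difference of \eqref{2lem} with $\delta\psi$ and, using $\nabla\delta\phi=\nabla\delta\psi+\tfrac{\delta\overline V}{\ell}e_z$ and the $z$-invariance $\ell J(t)=\int_\Omega\sigma\,\partial_z\delta\phi\,\pier{dv}$, derive the exact identity $\int_\Omega\sigma|\nabla\delta\phi|^2=\delta\overline V(t)\,J(t)$.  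Combined with the $\sigma$-weighted Cauchy--Schwarz inequality this gives $|J(t)|\le\frac{\sigma^*|B|}{\ell}|\delta\overline V(t)|$ with no coercivity constant and no triangle-inequality loss, yielding a contraction constant smaller than the paper's by a factor $2\sqrt{2}$.  Your approach is cleaner and shows why $\sigma_*$ never enters; the paper's argument is more pedestrian but entirely adequate for the fixed-point scheme, since any constant independent of $\rho$ suffices for choosing $\tau$.
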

\begin{proof}
  For all $t\in[\rho,\rho+\tau]$, it natural to introduce 
  the bilinear form $a_t:\gH1\times\gH1\rarr\Ar$ as
  \beq
    \label{bilin}
    a_t(w_1,w_2):=\int_\Omega{\sigma(\overline{\vartheta}(t))\nabla w_1\cdot\nabla w_2\,\pier{dv}}\,, \quad  w_1, w_2 \in \gH1.
  \eeq
Using \eqref{bound_sig_k} and the Poincar\'e inequality, it is easy to check that $a_t$ is continuous and coercive on $\gH1$ . Moreover, we see that the linear functional 
  \[
    w\mapsto -\int_\Omega{\sigma(\overline{\vartheta}(t))\frac{\overline{V}(t)}{\pier{\ell}}\frac{\partial w}{\partial z}\,\pier{dv}}
  \]
  is continuous on $\gH1$ for all $t\in[\rho,\rho+\tau]$:
  hence, the Lax-Milgram lemma ensures that for all $t\in[\rho,\rho+\tau]$ there exists a unique $\psi(t)\in\gH1$
  such that
  \[
    \int_{\Omega}{\sigma(\overline{\vartheta}(t))\nabla\psi(t)\cdot\nabla w\,\pier{dv}}+
    \int_{\Omega}{\sigma(\overline{\vartheta}(t))\frac{\overline{V}(t)}{\pier{\ell}}\frac{\partial w}{\partial z}\,\pier{dv}}=0 \quad
    \forall\, w\in \gH1\,.
  \]
  Moreover, testing the previous expression by $w=\psi(t)\in\gH1$ and using the Young inequality, it easily follows that
  \beq
    \label{est_psi_V}
    \int_\Omega{\sigma(\overline{\vartheta}(t))\left|\nabla \psi(t)\right|^2\,\pier{dv}}\leq
    \int_\Omega{\sigma(\overline{\vartheta}(t))\frac{\overline{V}^2(t)}{\ell^2}\,\pier{dv}}\,.
  \eeq
Due to the continuity properties   $\overline{\vartheta}\in C^0\bigl([\rho,\rho+\tau];\L2\bigr)$, $\overline{V}\in C^0\bigl([\rho,\rho+\tau]\bigr)$ and to 
\eqref{bound_sig_k}, we also infer that
  \[
    \psi\in C^0\bigl([\rho,\rho+\tau]; \gH1\bigr)\,.
  \]
Next, let us find $\phi\in C^0([\rho,\rho+\tau]; \H1)$ through \eqref{psi_lem}. 
At this point, we note that the same argument leading to~\eqref{no_zdep'}
ensures that the term
  \[
    \int_B{\sigma(\overline{\vartheta}(t))\frac{\partial\phi}{\partial z}(t)\,dx}
  \]
does not depend on $z$, and equation~\eqref{1lem} makes sense. Then, equation \eqref{1lem}
  becomes an ordinary differential equation in $[\rho,\rho+\tau]$ with appropriate initial conditions \eqref{init_lem}.  By virtue of \eqref{bound_sig_k}--\eqref{pier4}, \eqref{data1}
  and  $\overline{V}\in C^0\bigl([0,\rho]\bigr)$, the right hand side of \eqref{1lem} is a continuous function in  $[\rho,\rho+\tau]$. Hence, it is straightforward to see that 
there exists a unique solution
  \[
    V\in C^1\bigl([\rho,\rho+\tau]\bigr)
  \]
  of the Cauchy problem expressed by \eqref{1lem} and \eqref{init_lem}. Thus, we have proved the first part of the lemma. Let us focus now on the 
  contraction estimates. Given $\overline{V}_1, \overline{V}_2\in C^0([0,\rho+\tau])$ such that $\overline{V}_1=\overline{V}_2$ in $[0,\rho]$,
  let $(V_i, \psi_i, \phi_i)$, $i=1,2$, be the corresponding solutions of problem  \eqref{V_psi}--\eqref{init_lem}.  Then, if we consider \eqref{1lem} and 
  take the difference, testing by $V_1(t)-V_2(t)$ leads to
  \[
    \begin{split}
    \frac{\lambda_1}{2}\frac{d}{dt}&\left|V_1(t)-V_2(t)\right|^2+
    \lambda_2 \left|V_1(t)-V_2(t)\right|^2+
    \frac{\lambda_3}{2}\frac{d}{dt}\bigl(\int_{\rho}^t{\bigl(V_1(r)-V_2(r)\bigr)\,dr}\bigr)^2\\
    &=\bigl(V_1(t)-V_2(t)\bigr)\cdot\int_B{\sigma\bigl(\overline{\vartheta}(t)\bigr)\frac{\partial}{\partial z}\bigl(\phi_2(t)-\phi_1(t)\bigr)\,dx}\\
    &\leq\frac{\lambda_2}{2}\left|V_1(t)-V_2(t)\right|^2+
    \frac{1}{2\lambda_2}\bigl(\int_B{\sigma\bigl(\overline{\vartheta}(t)\bigr)\frac{\partial}{\partial z}\bigl(\phi_2(t)-\phi_1(t)\bigr)\,dx}\bigr)^2. 
    \end{split}
  \]
Hence, integrating on $(\rho,t)$ we deduce that
  \[
    \begin{split}
    \frac{\lambda_1}{2}\left|V_1(t)-V_2(t)\right|^2&+
    \frac{\lambda_2}{2}\int_{\rho}^t{\left|V_1(r)-V_2(r)\right|^2\,dr}+
    \frac{\lambda_3}{2}\bigl(\int_{\rho}^t{\bigl(V_1(r)-V_2(r)\bigr)\,dr}\bigr)^2\\
    &\leq\frac{1}{2\lambda_2}\int_{\rho}^t\bigl(\int_B{\sigma\bigl(\overline{\vartheta}(r)\bigr)\left|\nabla(\phi_1(r)-\phi_2(r))\right|\,dx}\bigr)^2\,dr
    \end{split}
  \]
and the H\"older inequality and condition~\eqref{bound_sig_k} allow us to infer 
  \[
    \begin{split}
    \lambda_1\left|V_1(t)-V_2(t)\right|^2 &\leq
    \frac{1}{\lambda_2}\int_{\rho}^t \bigl(\int_B{\sigma\bigl(\overline{\vartheta}(r)\bigr)\,dx}\bigr)\cdot
    \int_B{\sigma\bigl(\overline{\vartheta}(r)\bigr)\left|\nabla(\phi_1(r)-\phi_2(r))\right|^2\,dx\,dr}\\
    &\leq\frac{\sigma^*\left|B\right|}{\lambda_2}\int_{\rho}^t\int_B{\sigma\bigl(\overline{\vartheta}(r)\bigr)\left|\nabla(\phi_1(r)-\phi_2(r))\right|^2\,dx\,dr}
    \end{split}
  \]
    for all $t\in[\rho,\rho+\tau]$ . Now,  
taking  \eqref{psi_lem} and \eqref{est_psi_V} into account, 
 for all $t\in[\rho,\rho+\tau]$ we have
  \[
     \begin{split}
     \lambda_1\left|V_1(t)-V_2(t)\right|^2 &\leq
     2\, \frac{\sigma^*\left|B\right|}{\lambda_2}\int_{\rho}^t\int_B{\sigma
     \bigl(\overline{\vartheta}(r)\bigr)\left|\nabla(\psi_1(r)-\psi_2(r))\right|^2\,dx\,dr}\\
     &\quad{}+2 \, \frac{\sigma^*\left|B\right|}{\lambda_2}
     \int_{\rho}^t\int_B{\sigma\bigl(\overline{\vartheta}(r)\bigr)
     \frac{\left|\overline{V}_1(r)-\overline{V}_2(r)\right|^2}{\ell^2}\,dx\,dr}\\
     &\leq\frac{4}{\lambda_2}\bigl(\frac{\sigma^*\left|B\right|}{\pier{\ell}}\bigr)^{\! 2}
     \int_{\rho}^{\rho+\tau}{\left|\overline{V}_1(r)-\overline{V}_2(r)\right|^2\,dr}
     \\
    &\leq\frac{\tau}{\lambda_2}\bigl(\frac{2 \sigma^*\left|B\right|}{\pier{\ell}}\bigr)^{\!2}  
    \left\|\overline{V}_1-\overline{V}_2 \right\|_{L^\infty ( \rho,\rho+\tau) }^2 \, .
     \end{split}
  \]
At this point, passing to the square roots it is straightforward to derive the final estimate \eqref{contr2}.
\end{proof}

Another auxiliary result is formulated in the next lemma. 

\begin{lem}
  \label{lempier}
  Let $\rho \in [0,T]$ and $\tau>0 $ such that $\rho +\tau \leq T$.
  For all  
$$ 
  \overline{\vartheta} \in C^0\bigl([\rho,\rho+\tau ];\L2\bigr), \quad
  \overline{\phi}\in C^0\bigl([\rho,\rho+\tau ];\H1\bigr), \quad 
  \Theta_{\rho}\in \L2
$$
there exists a pair $(\vartheta, u)$ satisfying 
\beq
  \label{pier6} 
    \vartheta  \in H^1\bigl(\rho,\rho+\tau;\H1'\bigr), \quad \vartheta , \, u \in  C^0\bigl([\rho,\rho+\tau];\L2\bigr)\cap L^2\bigl(\rho,\rho+\tau;\H1\bigr)
\eeq
and solving the problem
\beq
\begin{split}
    &\int_{\Omega}{\frac{\partial\vartheta}{\partial t}(t)w\,\pier{dv}}+\int_{\Omega}{\nabla u(t)\cdot\nabla w\,\pier{dv}}+\int_{\Gamma_{\pier{\ell}}}{\beta(u(t))w\,ds}\\
    \label{pier7}
    &=\int_{\Omega}{\sigma(\overline{\vartheta}(t))\, T_\tau\bigl(\left|\nabla\overline{\phi}(t)\right|^2\bigr)w\,\pier{dv}}
    +\int_{\Gamma_{\pier{\ell}}}{h_{\Gamma}(t)w\,ds} \quad\forall\, w\in H^1(\Omega),
\end{split}
\eeq
 for a.e. $ t\in [\rho,\rho+\tau],$
    \begin{gather}
\label{pier8}
     \vartheta = \gamma (u) \quad \hbox{a.e. in } \, \Omega\times  [\rho,\rho+\tau],
 \\[0.1cm]
    \label{pier9}
 \vartheta(\rho)=\Theta_{\rho} \,.
    \end{gather}
\end{lem}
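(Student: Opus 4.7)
The plan is to deduce Lemma \ref{lempier} essentially as a direct application of the existence result proved in \cite{scarpa}, by checking that the hypotheses of that theorem are satisfied for the data of problem \eqref{pier7}--\eqref{pier9}. The role of the truncation operator $T_\tau$ is precisely to turn the otherwise critical source $\sigma(\overline{\vartheta})|\nabla\overline{\phi}|^2$ into a bounded datum, so that the problem fits into the classical framework of parabolic equations of Stefan type with a maximal monotone boundary graph.

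First I would rewrite the equation in the variables suited to \cite{scarpa}. Thanks to \eqref{bound_sig_k}--\eqref{gamma}, the map $K$ is bi-Lipschitz, so that the relation $\vartheta=\gamma(u)$, equivalent to $u=K(\vartheta)$, is a bi-Lipschitz strictly increasing transformation. Accordingly, \eqref{pier7} has the abstract form
$$\partial_t\gamma(u)-\Delta u+\beta(u)\big|_{\Gamma_\ell}=F+h_\Gamma\big|_{\Gamma_\ell},$$
with maximal monotone nonlinearities both in the evolutionary term and in the lateral boundary term.

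Next I would verify the functional-analytic hypotheses on the data. Setting $F(t):=\sigma(\overline{\vartheta}(t))\,T_\tau(|\nabla\overline{\phi}(t)|^2)$, the bounds $|\sigma|\leq\sigma^*$ from \eqref{bound_sig_k} and $|T_\tau(r)|\leq 1/\tau$ immediately give $F\in L^\infty(\Omega\times(\rho,\rho+\tau))$, hence a fortiori $F\in L^2$. The boundary source $h_\Gamma=\beta(K(\vartheta_\Gamma))$ lies in $L^2(\Gamma_\ell\times(\rho,\rho+\tau))$ by \eqref{data1}, \eqref{beta_lin} and the Lipschitz character of $K$. The initial datum $\Theta_\rho\in L^2(\Omega)$ is admissible, and the graph $\beta$ is continuous and monotone with $\beta(0)=0$ by \eqref{data2}. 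Crucially, condition \eqref{beta_lin} controls $|\beta|$ by a constant plus the primitive $\widehat{\beta}$, which is exactly the structural hypothesis on the boundary graph required by \cite{scarpa} and already emphasized in the third remark after Theorem \ref{theorem}.

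With these verifications in hand, the theorem of \cite{scarpa} supplies a pair $(u,\vartheta)$ with the regularity stated in \eqref{pier6} and satisfying \eqref{pier7}--\eqref{pier9}. The only real subtlety, and the main obstacle I would expect, is a careful translation of the abstract hypotheses of \cite{scarpa} into the concrete ones at hand: it is the truncation $T_\tau$ that secures the $L^\infty$-boundedness of $F$ and thereby makes the existence result applicable, and it is the growth condition \eqref{beta_lin} that permits the lateral boundary graph to be handled within the same variational framework.
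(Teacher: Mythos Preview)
Your proposal is correct and follows essentially the same approach as the paper: verify that the truncated source $\sigma(\overline{\vartheta})\,T_\tau(|\nabla\overline{\phi}|^2)$ lies in $L^\infty(\Omega\times(\rho,\rho+\tau))$ (hence in $L^2(\rho,\rho+\tau;L^2(\Omega))\cap L^1(\rho,\rho+\tau;L^\infty(\Omega))$), check the standing assumptions on $\gamma$, $\beta$, $h_\Gamma$ and $\Theta_\rho$, and then invoke \cite[Thm.~2.3]{scarpa}. The paper's proof is exactly this verification-then-citation argument, with no additional ingredients.
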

\begin{proof}
Note that the term $\sigma(\overline{\vartheta})\, T_\tau\bigl(\left|\nabla\overline{\phi}
\right|^2\bigr)$ appearing in the right hand side of \eqref{pier7} is in $L^\infty ( \Omega\times  [\rho,\rho+\tau]) $ due to \eqref{bound_sig_k} and \eqref{trunc},  whence  
$$ \sigma(\overline{\vartheta})\, T_\tau\bigl(\left|\nabla\overline{\phi}\right|^2\bigr) \in L^2\bigl(\rho,\rho+\tau;\L2\bigr) \cap L^1\bigl(\rho,\rho+\tau;L^\infty (\Omega)\bigr)$$
for all $\tau >0$. Let us recall also the assumptions \eqref{gamma}, 
\eqref{beta}, \eqref{h_gamma}, \eqref{data1}, \eqref{data2}, \eqref{beta_lin} for the graphs (actually, functions) $\gamma$ and $\beta$ and for the boundary datum $h_\Gamma \in L^2\bigl(\rho,\rho+\tau;L^2(\gamma_\ell)\bigr) $. Then, as $\Theta_{\rho} \in \L2$, 
we can apply \cite[Thm.~2.3]{scarpa} (see also \cite[Rem.~2.3]{scarpa}) and infer the existence of a pair  $(\vartheta, u)$ with the regularity specified by \eqref{pier6}.
\end{proof}

We are now ready to describe how to obtain a solution of the \pier{approximating} problem.
The idea is to build a solution $(V_\tau, \phi_\tau, \psi_\tau, \vartheta_\tau, u_\tau)$ step by step. 

Let us define $\vartheta_\tau (t) $ for $t \leq 0$ as in  \eqref{primadi0} and 
focus on the first interval $[0,\tau]$; consider $\overline{V}\in C^0([0,\tau])$
with $\overline{V} (0)= V_0.$  Then, Lemma~\ref{lemma} with the choice $\overline{\vartheta}(t) = \vartheta_\tau (t- \tau)= \vartheta_0$, $t\in [0,\tau]$,  tells us that there exists a unique triplet 
\[
  \bigl(V,\phi,\psi\bigr)\in C^1\bigl([0,\tau]\bigr)\times C^0\bigl([0,\tau];\H1\bigr) 
  \times C^0\bigl([0,\tau];\gH1\bigr)
\]
which solves the problem  \eqref{1lem}--\eqref{init_lem}. Hence, it is natural to introduce the
operator
\beq
  \label{contr_op}
  \Lambda_0: C^0\bigl([0,\tau]\bigr)\rarr C^1\bigl([0,\tau]\bigr), \quad \Lambda_0(\overline{V})=V
\eeq
and observe that, thanks to \eqref{contr2}, we can fix $\tau$ such that
\beq
\label{pier11}
0< \tau < \tau^*:= \frac{\lambda_1\lambda_2 \, \ell^2}{( 2\sigma^*\left|B\right|)^2} ,
\eeq 
that is, in order that \eqref{contr_op} be a contraction mapping.
Hence, there exists a unique
\[
  V^{(0)}\in C^1\bigl([0,\tau]\bigr)
\]
which is a fixed point for $\Lambda_0$, i.e. a solution, along with the related  
\[
\phi^{(0)} \in C^0\bigl([0,\tau];\H1\bigr)
 \hbox{ and }\,   
  \psi^{(0)}\in C^0\bigl([0,\tau];\gH1\bigr) ,
\]
of the problem expressed by \eqref{1app}, \eqref{2app}, \eqref{psi_u_app}, and the initial condition  $V(0)=V_0$; actually the triplet $ (V^{(0)}, 
\phi^{(0)} ,  \psi^{(0)})$ is the unique solution of this problem in  $[0,\tau]$. 

Next,
we choose $\rho=0$, $\overline{\vartheta}(t) = \vartheta_\tau (t- \tau)= \vartheta_0$ and 
$\overline{\phi}(t) = \phi^{(0)} (t)$ for $t\in [0,\tau]$,  besides the initial value $\Theta_0= \vartheta_0$,
in Lemma~\ref{lempier} and find a pair $(\vartheta, u) = (\vartheta^{(0)} , u^{(0)})$ with 
\[
  \vartheta^{(0)}\in H^1\bigl(0,\tau;\H1'\bigr), \quad 
  \vartheta^{(0)} , u^{(0)} \in 
 C^0\bigl([0,\tau];\L2\bigr)\cap L^2\bigl(0,\tau;\H1\bigr)
\]
solving \eqref{pier7}--\eqref{pier9} in $[0,\tau]$.  Hence, the quintuplet $ (V^{(0)}, 
\phi^{(0)} ,  \psi^{(0)},  \vartheta^{(0)} , u^{(0)} )$ yields a solution to~\eqref{1app}--\eqref{init_app} in $[0,\tau]$ and it is natural to define
\[
(V_\tau, \phi_\tau, \psi_\tau, \vartheta_\tau, u_\tau):= (V^{(0)}, 
\phi^{(0)} ,  \psi^{(0)},  \vartheta^{(0)} , u^{(0)} ) \quad\text{in } [0,\tau]\,.
\]

Let us construct now the solution of \pier{approximating} problem on the sub-interval $[\tau, 2\tau]$. Here, the delay terms are associated to the functions found in the previous
passage. We consider thus the problem
\beq
\begin{split}
      \lambda_1V'(t)+\lambda_2V(t)+\lambda_3\int_\tau^t{V(r)\,dr}=
      \lambda_1V'_0+\lambda_2V_0 - \lambda_3\int_0^\tau{V_\tau (r)\,dr}
      \\
      -\int_B{\sigma(\vartheta_{\tau}(t-\tau))\frac{\partial\phi}{\partial z}(t)\,dx}
      \label{1app'}
      + \int_B{\sigma(\vartheta_0)\frac{\partial\phi_0}{\partial z}\,dx}+\int_0^t{f(r)\,dr},
\end{split}
\eeq
\beq
    \int_{\Omega}{\sigma(\vartheta_{\tau}(t-\tau))\nabla\psi(t)\cdot\nabla w\,\pier{dv}}+
    \int_{\Omega}{\sigma(\vartheta_{\tau}(t-\tau))\frac{V(t)}{\pier{\ell}}\frac{\partial w}{\partial z}\,\pier{dv}}=0\quad
    \label{2app'}
    \forall \,  w\in \gH1\,,\\
\eeq
\beq
\begin{split}
    &\int_{\Omega}{\frac{\partial\vartheta}{\partial t}(t)w\,\pier{dv}}+\int_{\Omega}{\nabla u(t)\cdot\nabla w\,\pier{dv}}+\int_{\Gamma_{\pier{\ell}}}{\beta(u(t))w\,ds}\\
    \label{3app'}
    &=\int_{\Omega}{\sigma(\vartheta_{\tau}(t-\tau))T_\tau\bigl(\left|\nabla\phi(t)\right|^2\bigr)w\,\pier{dv}}
    +\int_{\Gamma_{\pier{\ell}}}{h_{\Gamma}(t)w\,ds} \quad\forall\, w\in H^{1}(\Omega)\,,\end{split}
    \eeq
  \beq
\label{psi_altra}
    \phi(x,z,t) =\psi(x,z,t) + \frac{z}{\pier{\ell}}V (t)  
    \quad \hbox{for a.e. } (x,z) \in \Omega\, ,
\eeq
\noindent for a.e. $t\in[\tau,2\tau] $, and       
\begin{gather}
\label{pier12}
     \vartheta = \gamma (u) \quad \hbox{a.e. in } \, \Omega\times  [\tau, 2\tau],
 \\[0.1cm]
    \label{init_app'}
    V(\tau)=V_\tau (\tau) \,, \quad \vartheta(\tau)=\vartheta_\tau (\tau).
\end{gather}
The same argument that we have used to prove the existence of a solution on the first interval can be easily repeated in this case in the same way.  Take $\rho=\tau$, $\overline{V} \in C^0([0,2\tau])$ such that $\overline{V} = V_\tau$ in $[0,\tau]$, and 
$\overline{\vartheta}(t) = \vartheta_\tau (t- \tau)$, $t\in [0,2\tau]$, in Lemma~\ref{lemma}. Find the corresponding  solution $(V,\phi,\psi)$ and observe that the obviously defined operator 
\beq
\nonumber
  \Lambda_1: C^0\bigl([\tau,2\tau]\bigr)\rarr C^1\bigl([\tau,2\tau]\bigr), \quad \Lambda_1\bigl(\overline{V}\bigr)=V\,,
\eeq
is a contraction mapping as well, due to \eqref{pier11} and \eqref{contr2}: this follows 
from the fact that the constant present in estimate~\eqref{contr2} does not depend 
on $\rho$.
Consequently, we are able to find a solution
\beq \nonumber
  V^{(1)}\in C^1\bigl([\tau,2\tau]\bigr),  \quad
\phi^{(1)} \in C^0\bigl([\tau,2\tau];\H1\bigr), \quad 
  \psi^{(1)}\in C^0\bigl([\tau,2\tau];\gH1\bigr) 
\eeq
of the problem \eqref{1app'}, \eqref{2app'}, \eqref{psi_altra}, and $V^{(1)} (\tau) = V_\tau (\tau).$ Then, we apply Lemma~\ref{lempier} with $\overline{\vartheta}(t) = \vartheta_\tau (t- \tau)$ and  $\overline{\phi}(t) = \phi^{(1)} (t)$ for $t\in [\tau, 2\tau]$,  choosing the initial value $\Theta_\tau = \vartheta_\tau (\tau)$. Thus, we obtain a solution 
\begin{gather} \nonumber
  \vartheta^{(1)}\in H^1\bigl(\tau,2\tau;\H1'\bigr), \quad   \vartheta^{(1)}, 
  u^{(1)}\in C^0\bigl([\tau,2\tau];\L2\bigr)\cap L^2\bigl(\tau,2\tau;\H1\bigr)
\end{gather}
of the problem \eqref{3app'}, \eqref{pier12}, and the second initial condition in \eqref{init_app'}.  At this point, we can extend our solution to the interval $[\tau, 2\tau]$ 
in the following way:
\[
(V_\tau, \phi_\tau, \psi_\tau, \vartheta_\tau, u_\tau):= (V^{(1)}, 
\phi^{(1)} ,  \psi^{(1)},  \vartheta^{(1)} , u^{(1)} ) \quad\text{in } (\tau, 2\tau]\,.
\]
Note that, by this definition, the {\em pasted} functions $\vartheta_\tau$ and $u_\tau $ remain continuous from $[0,2\tau] $ to $\L2$, and consequently also $\phi_\tau, \, \psi_\tau $
keep the continuity property in $[0,2\tau] $; then, by arguing on \eqref{1app'} and \eqref{init_app'}, we realize that  $V_\tau \in C^1\bigl([0,2\tau]\bigr)$.

The idea is to repeat this argument until we reach $T$. In general, when we solve the problem on $[n\tau, (n+1)\tau]$ we already have defined $(V_{\tau}, \psi_{\tau}, \vartheta_{\tau})$ on $[0,n\tau]$, so that it makes sense to consider the analogue of the problem \eqref{1app'}--\eqref{init_app'} in  $[n\tau, (n+1)\tau]$ by replacing $\tau$ with $n\tau$ 
in the extrema of the integrals in \eqref{1app'} and in the initial conditions \eqref{init_app'}.
Like before, the condition  \eqref{pier11} on $\tau$ ensures that the operator
\beq \nonumber
  \Lambda_n: C^0\bigl([n\tau, (n+1)\tau]\bigr)\rarr C^1\bigl([n\tau, (n+1)\tau]\bigr), \quad \Lambda_n\bigl(\overline{V}\bigr)=V
\eeq
is a contraction mapping, so that with the \luca{help} of both lemmas we can find a quintuplet 
$ (V^{(n)},  \phi^{(n)} ,  \psi^{(n)},  \vartheta^{(n)} , u^{(n)} )$ on $[n\tau, (n+1)\tau]$ 
and consequently update our solution by
\[
(V_\tau, \phi_\tau, \psi_\tau, \vartheta_\tau, u_\tau):= (V^{(n)}, 
\phi^{(n)} ,  \psi^{(n)},  \vartheta^{(n)} , u^{(n)} ) \quad\text{in } (n\tau, (n+1)\tau]\,.
\]
To reach exactly the final time $T$, it suffices to take $\tau = T/(n+1) $ with $n$ sufficiently large in order to satisfy \eqref{pier11}; otherwise, we solve the last iteration in the shorter interval $[n\tau, T] $ with $T < (n+1)\tau$.
It is now straightforward to deduce that $(V_\tau, \phi_\tau, \psi_\tau, \vartheta_\tau, u_\tau)$ is a solution for the \pier{approximating} problem \eqref{1app}--\eqref{init_app}.

Let us collect the result we have just proved in the following statement.
\begin{thm}
  \label{app_prob}
Under the assumptions of Theorem~\ref{theorem}, let $\tau \in (0, \tau^*)$. Then there  exists a quintuplet $ (V_\tau, \phi_\tau, \psi_\tau, \vartheta_\tau, u_\tau)$ such that
  \begin{gather}
    \label{sol1_app}
    V_{\tau}\in C^1\bigl([0,T]\bigr),\quad 
   \phi_{\tau}\in C^0\bigl([0,T]; \H1\bigr), \quad \psi_{\tau}\in C^0\bigl([0,T];\gH1\bigr), \\
    \label{sol3_app}
    \vartheta_{\tau}\in H^1\bigl(0,T;\H1'\bigr),  \quad  \vartheta_{\tau}, u_\tau  \in  
    C^0\bigl([0,T];\L2\bigr)\cap L^2\bigl(0,T;\H1\bigr)
  \end{gather}
and satisfying 
\beq
  \begin{split}
      \lambda_1V_{\tau}'(t)+\lambda_2V_{\tau}(t)+\lambda_3\int_0^t{V_{\tau}(r)\,dr}=
      \lambda_1V'_0+\lambda_2V_0-\int_B{\sigma(\vartheta_{\tau}(t-\tau))\frac{\partial\phi_{\tau}}{\partial z}(t)\,dx}\\
      \label{1'}
      + \int_B{\sigma(\vartheta_0)\frac{\partial\phi_0}{\partial z}\,dx}+\int_0^t{f(r)\,dr} \quad\text{for all $t\in[0,T]$}\,,
      \end{split}
\eeq
\beq
\begin{split}
    \int_{\Omega}{\sigma(\vartheta_{\tau}(t-\tau))\nabla\psi_{\tau}(t)\cdot\nabla w\,\pier{dv}}+
    \int_{\Omega}{\sigma(\vartheta_{\tau}(t-\tau))\frac{V_{\tau}(t)}{\pier{\ell}}\frac{\partial w}{\partial z}\,\pier{dv}}=0\\
    \label{2'}
    \forall \,  w\in \gH1\,, \quad\text{for all $\, t\in[0,T]$}\,,
\end{split}
\eeq
\beq
\begin{split}
    \int_{\Omega}{\frac{\partial\vartheta_{\tau}}{\partial t}(t)w\,\pier{dv}}+\int_{\Omega}{\nabla u_{\tau}(t)\cdot\nabla w\,\pier{dv}}
    +\int_{\Gamma_{\pier{\ell}}}{\beta(u_{\tau}(t))w\,ds}\\
  {}=\int_{\Omega}{\sigma(\vartheta_{\tau}(t-\tau))T_\tau\bigl(\left|\nabla\phi_{\tau}(t)\right|^2\bigr)w\,\pier{dv}}  +\int_{\Gamma_{\pier{\ell}}}{h_{\Gamma}(t)w\,ds}\\
    \label{3'}
    \forall \, w\in\H1\,, \quad \text{for a.e. $t\in[0,T]$}\,,
\end{split}
\eeq
\begin{gather}
    \label{psi_u'}
    \phi_{\tau}(x,z,t) =\psi_{\tau}(x,z,t) + \frac{z}{\pier{\ell}}V_{\tau}(t)  
    \quad \hbox{for a.e. } (x,z) \in \Omega, \hbox{ for all } t \in [0,T]\, ,\\
     \label{pier5'}
     \vartheta_{\tau} = \gamma (u_{\tau}) \quad \hbox{a.e. in } \, Q, 
     \\[0.1cm]
    \label{init'}
    V_{\tau}(0)=V_0\,, \quad \vartheta_{\tau}(0)=\vartheta_0\,.
    \end{gather}
\end{thm}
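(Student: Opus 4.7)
The plan is to carry out the iterative construction that has been sketched informally in the paragraphs preceding the statement, turning it into a clean inductive argument on the sub-intervals $[n\tau,(n+1)\tau]$. After extending $\vartheta_\tau$ by $\vartheta_0$ for negative times, I fix $\tau\in(0,\tau^*)$ so that condition \eqref{pier11} holds, choose the integer $N$ with $N\tau<T\leq (N+1)\tau$, and build the quintuplet step by step on $[0,\tau]$, $[\tau,2\tau]$, \dots, $[N\tau,T]$. The construction on a generic interval $[\rho,\rho+\tau]$ (with $\rho=n\tau$) relies on two black boxes already at our disposal: Lemma~\ref{lemma}, which produces the triplet $(V,\phi,\psi)$ once a candidate $\overline{V}$ and the datum $\overline{\vartheta}(\cdot)=\vartheta_\tau(\cdot-\tau)$ are prescribed; and Lemma~\ref{lempier}, which takes the $\phi$ just obtained and the same $\overline{\vartheta}$ and returns $(\vartheta,u)$ of the required regularity.

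For the base step I take $\overline{\vartheta}\equiv\vartheta_0$ on $[0,\tau]$ and introduce the map $\Lambda_0\colon C^0([0,\tau])\to C^1([0,\tau])$ defined by $\Lambda_0(\overline V)=V$, where $V$ is the first component of Lemma~\ref{lemma}'s output. Estimate \eqref{contr2} together with the choice $\tau<\tau^*$ of \eqref{pier11} shows that $\Lambda_0$, read as a map from $C^0([0,\tau])$ into itself via the natural inclusion, is a strict contraction (with constant $\sqrt{\tau/\tau^*}<1$). The Banach fixed point theorem furnishes the unique $V^{(0)}$, and Lemma~\ref{lemma} yields the associated $(\phi^{(0)},\psi^{(0)})$; feeding $\overline{\phi}=\phi^{(0)}$ and $\Theta_0=\vartheta_0$ into Lemma~\ref{lempier} produces $(\vartheta^{(0)},u^{(0)})$. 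I then set $(V_\tau,\phi_\tau,\psi_\tau,\vartheta_\tau,u_\tau):=(V^{(0)},\phi^{(0)},\psi^{(0)},\vartheta^{(0)},u^{(0)})$ on $[0,\tau]$.

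The inductive step is the same argument on $[n\tau,(n+1)\tau]$: I use the already constructed $\vartheta_\tau$ on $[0,n\tau]$ to define $\overline{\vartheta}(t):=\vartheta_\tau(t-\tau)$ for $t\in[n\tau,(n+1)\tau]$, which belongs to $C^0([n\tau,(n+1)\tau];L^2(\Omega))$ by \eqref{sol3_app} at the previous step. The corresponding operator $\Lambda_n\colon C^0([n\tau,(n+1)\tau])\to C^1([n\tau,(n+1)\tau])$ is contractive because the constant in \eqref{contr2} depends only on the global data $\lambda_1,\lambda_2,\sigma^*,|B|,\ell$ and on $\tau$, not on $\rho=n\tau$. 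Its fixed point $V^{(n)}$ and the attached $(\phi^{(n)},\psi^{(n)},\vartheta^{(n)},u^{(n)})$ are then pasted to the previously defined functions by setting them equal to $(V^{(n)},\phi^{(n)},\psi^{(n)},\vartheta^{(n)},u^{(n)})$ on $(n\tau,(n+1)\tau]$. The last interval, if shorter than $\tau$, is handled by the very same lemmas applied on $[N\tau,T]$ (the contraction estimate is even better there).

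The main issue to check at the pasting step is that the regularity required in \eqref{sol1_app}--\eqref{sol3_app} survives the concatenation. For $\vartheta_\tau,u_\tau,\psi_\tau,\phi_\tau$ this is immediate: continuity of $\vartheta^{(n)}(\cdot)$ and $u^{(n)}(\cdot)$ into $L^2(\Omega)$ on each closed sub-interval, together with the matching initial condition $\vartheta^{(n)}(n\tau)=\vartheta_\tau(n\tau)$ inherited from the previous step, give global continuity into $L^2(\Omega)$; the Lax--Milgram representation of $\psi_\tau$ and the definition \eqref{psi_u'} of $\phi_\tau$ then transfer this continuity into $H^1_{0,\Gamma_b}(\Omega)$ and $H^1(\Omega)$ respectively. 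The $H^1(0,T;H^1(\Omega)')$ regularity of $\vartheta_\tau$ and the $L^2(0,T;H^1(\Omega))$ regularity of both $\vartheta_\tau,u_\tau$ glue across the junctions because, by \eqref{pier7}, the distributional time derivative on the whole $(0,T)$ coincides piecewise with the one on each sub-interval (the lateral traces at $t=n\tau$ agree). Once this is verified, rewriting the integrated equation \eqref{1lem} at $t=n\tau$ and comparing it with the analogous relation coming from the previous interval shows that $V_\tau\in C^1([0,T])$, completing the verification of \eqref{1'}--\eqref{init'}. The delicate point — and really the only non-mechanical one — is keeping the contraction constant in \eqref{contr2} independent of the sub-interval, which is precisely why \eqref{pier11} was imposed once and for all.
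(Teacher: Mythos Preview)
Your proposal is correct and follows essentially the same approach as the paper: the iterative construction on sub-intervals $[n\tau,(n+1)\tau]$ via the contraction mapping $\Lambda_n$ (whose Lipschitz constant is independent of $n$ thanks to \eqref{contr2} and \eqref{pier11}), followed by Lemma~\ref{lempier}, and then the pasting argument checking that the regularities \eqref{sol1_app}--\eqref{sol3_app} survive concatenation. The only cosmetic difference is that you phrase the construction as a formal induction, whereas the paper writes it out for $n=0,1$ and then indicates the general step.
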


%%%%%%%%%%%%%%%%%%%%%%%%%%%%%%%%%%%%%%%%%%%%%%%%%%%%%%%%%%%%%%%%%%

\section{Uniform estimates}
\setcounter{equation}{0}
\label{estimat}

In this section, we present some estimates, independent of $\tau$, on the solution of the \pier{approximating} problem \eqref{1'}--\eqref{init'} specified by Theorem~\ref{app_prob}. Then, we  aim to pass to the limit and recover a solution to the original problem \eqref{1}--\eqref{init}.

\subsection{The first estimate}
We would like to show that 
\beq
  \label{1est}
  \vartheta_{\tau}\geq\vartheta_{*} \quad\text{a.e. in } Q\,, \quad\text{for all } \tau \in (0,\tau^*),
\eeq
where the value $\vartheta^*>0$ comes out in assumption \eqref{infess}. 

The idea is to test equation \eqref{3'} by
\[
  w=-(\vartheta_{\tau}(t)-\vartheta_{*})^-\in\H1
\]
and handle the different terms separately.
Firstly, we can note that
\[
  -\int_{\Omega}{\frac{\partial\vartheta_{\tau}}{\partial t}(t)(\vartheta_{\tau}(t)-\vartheta_{*})^-\,\pier{dv}}=
  \frac{1}{2}\frac{d}{dt}\int_{\Omega}{\bigl((\vartheta_{\tau}(t)-\vartheta_{*})^-\bigr)^2\,\pier{dv}}\,;
\]
secondly, in view of \eqref{pier5'}, \eqref{bound_sig_k}, \eqref{K}, \eqref{gamma} we have that
\[
  \begin{split}
  &-\int_{\Omega}{\nabla{u_{\tau}(t)}\cdot\nabla(\vartheta_{\tau}(t)-\vartheta_{*})^-\,\pier{dv}}=
  -\int_{\Omega}{K'(\vartheta_{\tau}(t))\nabla{\vartheta_{\tau}(t)}\cdot\nabla(\vartheta_{\tau}(t)-\vartheta_{*})^-\,\pier{dv}}\\
  &=\left.
  \begin{cases}
    0 \quad &\text{if } \vartheta_{\tau}(t)\geq\vartheta_{*}\\
    \luca{\int_{\Omega}{k(\vartheta_{\tau}(t))\left|\nabla\vartheta_{\tau}(t)\right|^2\,\pier{dv}} } \quad &\text{if } \vartheta_{\tau}(t)< \vartheta_{*}
  \end{cases}
  \right\}  \geq0\,.
  \end{split}
\]
Moreover, let us verify that
\[
   -\int_{\Gamma_{\pier{\ell}}}{\bigl(\beta(K(\vartheta_{\tau}(t)))-\beta(K(\vartheta_{\Gamma}(t)))\bigr)(\vartheta_{\tau}(t)-\vartheta_{*})^-\,ds}\geq0\,.
\]
Indeed, if $\vartheta_{\tau}(t)\geq\vartheta_{*}$ this quantity is $0$, so let us consider the case $\vartheta_{\tau}(t)<\vartheta_{*}\, $:
as (cf.~\eqref{infess}) $\vartheta_{*}\leq\vartheta_{\Gamma}$ it turns out that $\vartheta_{\tau}(t)\leq\vartheta_{\Gamma}(t)$ almost everywhere in $\Gamma_\ell$. Hence,  \eqref{h_gamma} and the monotonicity of $\beta$ and $K$ ensure that
\[
  -\int_{\Gamma_{\pier{\ell}}}{\bigl(\beta(u_{\tau}(t))-h_{\Gamma}(t)\bigr)(\vartheta_{\tau}(t)-\vartheta_{*})^-\,ds}\geq0\,.
\]
Taking all these remarks into account,
from \eqref{3'} we deduce that
\[
  \begin{split}
  \frac{d}{dt}\int_{\Omega}{\frac{\bigl((\vartheta_{\tau}(t)-\vartheta_{*})^-\bigr)^2}{2}\,\pier{dv}}\leq
  &-\int_{\Omega}{\sigma(\vartheta_{\tau}(t-\tau))T_\tau\bigl(\left|\nabla\phi_{\tau}(t)\right|^2\bigr)(\vartheta_{\tau}(t)-\vartheta_{*})^-\,\pier{dv}}
  \leq0\,;
  \end{split}
\]
then, integrating with respect to $t$ and recalling \eqref{init'} and \eqref{infess}, we obtain
\[
  \int_{\Omega}{\bigl((\vartheta_{\tau}(t)-\vartheta_{*})^-\bigr)^2\,\pier{dv}}\leq
  \int_{\Omega}{\bigl((\vartheta_0-\vartheta_{*})^-\bigr)^2\,\pier{dv}}=0
  \quad\text{for a.e. } t\in[0,T]\,,
\]
which implies condition \eqref{1est}.

\subsection{The second estimate}
The aim of this subsection is to prove the existence of some positive constants $C_1, C_2$, independent of $\tau$,
such that the following estimates hold:
\begin{gather}
  \label{2est}
  \l|V_{\tau}\r|_{W^{1,\infty}(0,T)}\leq C_1 \quad\text{for all } \tau \in (0,\tau^*)\,,\\
  \label{3est}
  \l|\psi_{\tau}\r|_{L^\infty (0,T;\gH1)} +\l|\phi_{\tau}\r|_{L^\infty(0,T;\H1)}\leq C_2 \quad\text{for all } \tau \in (0,\tau^*)\,.
\end{gather}

We multiply equation \eqref{1'} by $V_{\tau}(t)$ and integrate. Using \eqref{bound_sig_k}, the independence of $z$ of the integrals over $B$ (cf.~\eqref{no_zdep'}) and the Young inequality, we deduce that
\[
  \begin{split}
  &\frac{\lambda_1}{2}\left|V_{\tau}(t)\right|^2+\lambda_2\int_0^t{V_{\tau}^2(r)\,dr}+\frac{\lambda_3}{2}\bigl(\int_0^t{V_{\tau}(r)\,dr}\bigr)^2 \\
  &=\frac{\lambda_1}{2}V_0^2
  +\int_0^t{\bigl(\lambda_1V_0'+\lambda_2 V_0+
  \int_B{\sigma(\vartheta_0)\frac{\partial\phi_0}{\partial z}\,dx}+\int_0^r{f(\rho)\,d\rho}\bigr)V_{\tau}(r)\,dr}\\
  &\quad{}-\int_0^t\bigl(\int_B{\sigma(\vartheta_{\tau}(r-\tau))\frac{\partial\phi_{\tau}}{\partial z}(r)\,dx}\bigr)V_{\tau}(r)\,dr\\
  &\leq
  \frac{\lambda_1}{2}V_0^2+\frac{1}{2\eps}\int_0^t{\bigl(\lambda_1V_0'+\lambda_2 V_0+
  \frac{1}{\pier{\ell}}\int_\Omega{\sigma(\vartheta_0)\frac{\partial\phi_0}{\partial z}\,\pier{dv}}+\int_0^r{f(\rho)\,d\rho}\bigr)^2dr}+\frac{\eps}{2}\int_0^t{V_{\tau}^2(r)\,dr}\\
  &\quad{}+\frac{1}{2\eps \ell}\int_0^t\int_\Omega{\sigma(\vartheta_{\tau}(r-\tau))\left|\frac{\partial\phi_{\tau}}{\partial z}(r)\right|^2\pier{dv}dr}+
  \frac{\eps}{2\ell}\int_0^t\int_\Omega{\sigma(\vartheta_{\tau}(r-\tau))V_{\tau}^2(r)\,\pier{dv}dr}\\
  &\leq\left[\frac{\lambda_1}{2}V_0^2+\frac{T}{\eps}\bigl(\lambda_1|V_0|+\lambda_2|V_0'|+\l|f\r|_{L^1(0,T)}\bigr)^{\! \!2}\right]+
  \frac{T\sigma^*|\Omega|}{\eps \ell^2}\int_\Omega{\sigma(\vartheta_0)\left|\nabla\phi_0\right|^2\,\pier{dv}}\\
  &\quad{}+\frac{1}{2\eps \ell}\int_0^t\int_\Omega{\sigma(\vartheta_{\tau}(r-\tau))\left|\nabla\phi_{\tau}(r)\right|^2\,\pier{dv}dr}+
  \frac{\eps}{2}\bigl(1+\frac{\sigma^*|\Omega|}{\pier{\ell}}\bigr)\int_0^t{V_{\tau}^2(r)\,dr}
  \end{split}
\]
for all $\eps>0$. If we make the choice $\eps= \displaystyle \frac{\lambda_2\ell }{\ell+\sigma^*|\Omega|}$, we infer that
\beq
  \begin{split}
  &\frac{\lambda_1}{2}\left|V_{\tau}(t)\right|^2+\frac{\lambda_2}{2}
   \int_0^t{V_{\tau}^2(r) \,dr} \\
  &\leq  \frac{\lambda_1}{2}V_0^2
  +\frac{T(\ell+\sigma^*|\Omega|)}{\lambda_2\ell}\bigl(\lambda_1
    |V_0|+\lambda_2|V_0'|+\l|f\r|_{L^1(0,T)}\bigr)^{\! 2}\\
  &\quad{}+ \frac{T\sigma^*|\Omega|(\ell +\sigma^*|\Omega| )}{\lambda_2 \ell^3}\int_ 
   \Omega{\sigma(\vartheta_0)\left|\nabla\phi_0\right|^2\,\pier{dv}}\\ 
  &\quad{}+ \frac{\ell+\sigma^*|\Omega|}{2\lambda_2 \ell^2}\int_0^t\int_
   \Omega{\sigma(\vartheta_{\tau}(r-\tau))\left|\nabla\phi_{\tau}(r)\right|^2\,\pier{dv}dr}
  \label{2est'}
  \end{split}
\eeq
for all $t\in[0,T]$. At this point, if we take $w= \psi_{\tau}(t)\in\gH1$ in \eqref{2'}, arguing as for \eqref{est_psi_V} we obtain
\beq
\label{2est'''}
  \int_{\Omega}{\sigma(\vartheta_{\tau}(t-\tau))\left|\nabla\psi_{\tau}(t)\right|^2\,\pier{dv}}\leq
  \int_{\Omega}{\sigma(\vartheta_{\tau}(t-\tau))\frac{V_{\tau}^2(t)}{\ell^2}\,\pier{dv}}\,.
\eeq
Therefore, with the help of \eqref{psi_u'} and \eqref{bound_sig_k} we easily deduce that
\beq
  \label{2est''}
  \int_{\Omega}{\sigma(\vartheta_{\tau}(t-\tau))\left|\nabla\phi_{\tau}(t)\right|^2\,\pier{dv}}\leq
  \frac{4\sigma^*|\Omega| } {\ell^2} V_{\tau}^2(t) .
\eeq
Now, if we substitute \eqref{2est''} in \eqref{2est'} and recall \eqref{pier4}, we 
have that 
\[
  \begin{split}
  \frac{\lambda_1}{2}\left|V_{\tau}(t)\right|^2
  &\leq  \frac{\lambda_1}{2}V_0^2
  +\frac{T(\ell+\sigma^*|\Omega|)}{\lambda_2\ell}\bigl(\lambda_1
    |V_0|+\lambda_2|V_0'|+\l|f\r|_{L^1(0,T)}\bigr)^{\! 2}\\
  &\quad{}+ \frac{T|\sigma^*|^2 |\Omega|(\ell +\sigma^*|\Omega| )}{\lambda_2 \ell^3} \,
  \|\phi_0 \|_{\H1}^2 + \frac{2\sigma^*|\Omega|( \ell+\sigma^*|\Omega|)}{\lambda_2 \ell^4}\int_0^t
  |V_{\tau} (r)|^2dr
  \end{split}
\]
for all $t\in[0,T]$. Hence, we can apply the Gronwall lemma and conclude that 
$\{V_\tau\}$ is bounded in $L^\infty(0,T)$ independently of $\tau$.
Moreover, taking this into account in \eqref{2est'''}-\eqref{2est''} and recalling that 
$\sigma (r) \geq\sigma_*>0$ for all $r\in\Ar$,
thanks to the Poincar\'e inequality and \eqref{psi_u'} we obtain the estimate~\eqref{3est}.
Finally, using condition \eqref{3est} and the bound for $\|V_\tau\|_{L^\infty(0,T)} $,
by a comparison of terms in \eqref{1'} we deduce \eqref{2est}.

\subsection{The third estimate}
We would like to show now there exists a contant $C_3>0$ such that 
\beq
  \label{est_inf1}
    \|\vartheta_\tau\|_{L^\infty(0,T;L^1(\Omega))} + 
  \|  u_\tau\|_{L^\infty(0,T;L^1(\Omega))}\leq C_3 \quad \text{for all } \tau \in (0,\tau^*)\,.
\eeq
We choose $w=1$ in \eqref{3'}.  Note that $\beta(u_\tau)\geq0$ almost everywhere on $\Sigma_{\pier{\ell}}$, thanks 
to \eqref{data2} and since
\beq u_\tau\geq u^*:= K(\vartheta_*)>0 \label{pier14} \eeq
by \eqref{1est} and \eqref{pier5'}. Then,  on account of \eqref{3est}  we deduce that
\[
  \int_\Omega{\frac{\partial\vartheta_\tau}{\partial t}(t)\,\pier{dv}}\leq
  \sigma^*(C_2)^2 +|\Gamma_{\pier{\ell}}|^{1/2}\l|h_\Gamma(t)\r|_{L^2(\Gamma_{\pier{\ell}})}\  \hbox{ for a.e. $t\in[0,T]$},
\]
where $ |\Gamma_{\pier{\ell}}|$ denotes the surface measure of $\Gamma_{\pier{\ell}}$.
Now, by integrating with respect to $t$ and using the H\"older inequality we obtain
\beq
  \int_\Omega{\vartheta_\tau(t)\,\pier{dv}}\leq
  \int_\Omega{\vartheta_0\,\pier{dv}}+T\sigma^*(C_2)^2 +\sqrt{|\Gamma_{\pier{\ell}}|T} \l|h_\Gamma\r|_{L^2(0,T;L^2(\Gamma_{\pier{\ell}}))}
\label{pier13}
\eeq
for almost every $t\in[0,T]$. Observe tht $\vartheta_0\in L^1(\Omega)$ by virtue of \eqref{pier3}. We also recall the properties \eqref{K}--\eqref{gamma} which imply that 
$ |K(r)|\leq C_K r $, $r\in \Ar$, for some Lipschitz constant $C_K$. Hence, the inequality 
\eqref{pier13} and condition \eqref{pier5'} imply \eqref{est_inf1}.

\subsection{The fourth estimate}
This is the really important estimate we are going to prove. For convenience,
let us ask the reader to state the estimate, unusually, at the end of the subsection.
We have been inspired by a computation already performed in \cite[pp.~1361--1362]{fpr}.

The idea is to test equation \eqref{3'} by 
\beq
  \label{w}
  w=1-\frac{1}{(1+u_{\tau}(t))^{1-\alpha}}\in\H1\,,  \ \hbox{ with } \, \alpha \in (0,1)\,:
\eeq
please note that this makes sense since \eqref{pier14} holds almost everywhere in $Q$.
Let us handle the different terms of \eqref{3'} separately. The first term is 
\[
  \begin{split}
  &\int_{\Omega}{\frac{\partial\vartheta_{\tau}}{\partial t}(t)\bigl(1-\frac{1}{(1+u_{\tau}(t))^{1-\alpha}}\bigr)\,\pier{dv}}=
  \int_\Omega{\frac{\partial\vartheta_{\tau}}{\partial t}(t)\bigl(1-\frac{1}{(1+K(\vartheta_{\tau}(t)))^{1-\alpha}}\bigr)\,\pier{dv}}\\
  &=\frac{d}{dt}\int_\Omega{F(\vartheta_{\tau}(t))\,\pier{dv}}\,, \quad\text{where } \, 
  F(r):=\int_0^r{\bigl(1-\frac{1}{(1+K(\rho))^{1-\alpha}}\bigr)\,d\rho}, \ \, r\geq 0; 
  \end{split}
\]
please note that this computation is formal, since the regularity \eqref{sol3_app} would not
allow us to apply the chain rule.  However, this is not restrictive since we can repeat the same argument on a further approximating scheme. Secondly, if we set
\beq
  \label{z_tau}
  z_{\tau}:=\bigl(1+u_{\tau}\bigr)^{\alpha/2}\,,
\eeq
the second term from equation \eqref{3'} can be handled as follows:
\[
  \begin{split}
  &\int_\Omega{\nabla u_{\tau}(t)\cdot\nabla\bigl(1-\frac{1}{(1+u_{\tau}(t))^{1-\alpha}}\bigr)\,\pier{dv}}\\
  &=-\int_\Omega{\nabla\bigl(1+u_{\tau}(t)\bigr)\cdot\nabla\bigl(1+u_{\tau}(t)\bigr)^{\alpha-1}\,\pier{dv}}
  =-\int_\Omega{\nabla z_{\tau}^{2/\alpha}(t)\cdot\nabla z_{\tau}^{2-2/\alpha}(t)\,\pier{dv}}\\
  &=\int_\Omega{-\frac{2}{\alpha}z_{\tau}^{-1 + 2/\alpha}(t)\nabla z_{\tau}(t)\cdot\bigl(2-\frac{2}{\alpha}\bigr)z_{\tau}^{1-2/\alpha}(t)\nabla z_{\tau}(t)\,\pier{dv}}
  =\int_{\Omega}{\frac{4(1-\alpha)}{\alpha^2}\left|\nabla z_{\tau}(t)\right|^2\,\pier{dv}}\,.
  \end{split}
\]
Moreover, the assumption \eqref{data2} and condition \eqref{pier14} imply that
\[
  \int_{\Gamma_{\pier{\ell}}}{\beta(u_{\tau}(t))\bigl(1-\frac{1}{(1+u_{\tau}(t))^{1-\alpha}}\bigr)\,ds}\geq0\,.
\]
Finally, as 
\[
  \left|1-\frac{1}{(1+u_{\tau}(t))^{1-\alpha}}\right|\leq1\,,
\]
with the help of  \eqref{3est} we deduce that
\begin{gather}
  \nonumber
  \int_{\Omega}{\sigma(\vartheta_{\tau}(t-\tau))T_\tau\bigl(\left|\nabla\phi_{\tau}(t)\right|^2\bigr)\bigl(1-\frac{1}{(1+u_{\tau}(t))^{1-\alpha}}\bigr)\,\pier{dv}}
  \leq \sigma^* (C_2)^2 \,,\\
  \nonumber
  \int_{\Gamma_{\pier{\ell}}}{h_{\Gamma}(t)\bigl(1-\frac{1}{(1+u_{\tau}(t))^{1-\alpha}}\bigr)\,ds}\leq\frac{1}{2}\l|h_{\Gamma}(t)\r|_{L^2(\Gamma_{\pier{\ell}})}^2
  +\frac{|\Gamma_{\pier{\ell}}|}{2}\,.
\end{gather}
Taking all these estimates into account, equation \eqref{3'} tested by \eqref{w} tells us that
\[
\begin{split}
  &\frac{d}{dt}\int_\Omega{F(\vartheta_{\tau}(t))\,\pier{dv}}+
  \int_{\Omega}{\frac{4(1-\alpha)}{\alpha^2}\left|\nabla z_{\tau}(t)\right|^2\,\pier{dv}}\\
  &\leq
  \sigma^* (C_2)^2 +\frac{1}{2}\l|h_{\Gamma}(t)\r|_{L^2(\Gamma_{\pier{\ell}})}^2+\frac{|\Gamma_{\pier{\ell}}|}{2} \quad \hbox{for a.e. } \, t\in [0,T]; 
\end{split}
\]
hence, integrating on $(0,T)$ we obtain
\[
  \frac{4(1-\alpha)}{\alpha^2}\int_{Q}{\left|\nabla z_{\tau}(t)\right|^2\,\pier{dv}dt}\leq
  \int_{\Omega}{F(\vartheta_0)\,\pier{dv}}+T\bigl(\sigma^*(C_2)^2+\frac{|\Gamma_{\pier{\ell}}|}{2}\bigr)+\frac{1}{2}\l|h_{\Gamma}\r|_{L^2(0,T;\L2)}^2\,.
\]
As $F$ grows at most linearly at infinity, from \eqref{pier3} and \eqref{data1} we deduce that there exists a positive constant $D_1$ such that
\beq
  \label{z_est1}
  \l|\nabla z_{\tau}\r|_{L^2(0,T;\L2)}\leq D_1 \quad\text{for all } \tau \in (0,\tau^*)\,;
\eeq
furthermore, \eqref{est_inf1} and \eqref{z_tau} imply that there exists a constant $D_2>0$ such that
\beq
  \label{z_est2}
  \l|z_{\tau}\r|_{L^\infty(0,T;L^{2/\alpha}(\Omega))}\leq D_2 \quad\text{for all } \tau \in (0,\tau^*)\,.
\eeq
Now, by \eqref{z_est1} and \eqref{z_est2} $\{z_\tau\}$ is bounded in $L^2 (0,T;\H1)$ independently of $\tau$ and, as $\Omega\subseteq\Ar^3$, the Sobolev embedding theorem entails that
\beq
  \label{z_est3}
  \l|z_{\tau}\r|_{L^2(0,T;L^6(\Omega))}\leq D_3 \quad\text{for all } \tau \in (0,\tau^*),
\eeq
for a certain positive constant $D_3$. At this point, we are able to use the following interpolation inequality:
\beq
\begin{split}
  \label{interp}
  \l|z_{\tau}\r|_{L^r(0,T;L^s(\Omega))}\leq
  \l|z_{\tau}\r|_{L^\infty(0,T;L^{2/\alpha}(\Omega))}^{\vartheta}
  \l|z_{\tau}\r|_{L^2(0,T;L^6(\Omega))}^{1-\vartheta}\,,\\[0.2cm]
  \hbox{with } \ \frac{1}{r}=\frac{\vartheta}{\infty}+\frac{1-\vartheta}{2}\,,\ \ 
  \frac{1}{s}=\frac{\alpha\vartheta}{2}+\frac{1-\vartheta}{6}\,.
\end{split}
\eeq
Making the particular choice $r=s$ leads to $\vartheta= 2/(2+3\alpha) $ and consequently, due to \eqref{z_est2} and \eqref{z_est3}, there exists $D_4>0$ such that
\beq
  \label{z_est4}
  \l|z_\tau\r|_{L^{\frac{2(2+3\alpha)}{3\alpha}}(Q)}\leq D_4 \quad\text{for all } \tau \in (0,\tau^*)\,.
\eeq
Recalling now \eqref{z_tau}, condition \eqref{z_est4} implies
\beq
  \label{u_est}
  \l|u_\tau\r|_{L^{\frac{2+3\alpha}{3}}(Q)}\leq D_5 \quad\text{for all } \tau \in (0,\tau^*)\,,
\eeq
for a certain positive constant $D_5$. At this point, thanks to the generalized Hölder inequality and conditions \eqref{z_est1} and \eqref{z_est4},
 we note that for almost all $t\in[0,T]$
\[
  \nabla u_\tau(t)=\frac{2}{\alpha}z_\tau^{\frac{2-\alpha}{\alpha}}(t)\nabla z_\tau(t)\in L^{p}(\Omega)\,,
\]
where $p$ is the conjugate exponent of $q$ such that
\[
  \frac{3\alpha}{2(2+3\alpha)}\frac{2-\alpha}{\alpha}+\frac{1}{2}+\frac{1}{q}=1\,;
\]
then, an easy calculation shows that for all values $\alpha\in (2/3,1)$
\beq
  \label{p}
  q=\frac{3\alpha+2}{3\alpha-2}\in (5,+\infty)\,,\quad p=\frac{2+3\alpha}{4}\in (1,5/4)  \,.
\eeq
Taking all these remarks into account, we deduce that there exists $D_6>0$ such that
\beq
  \label{u_est'}
  \l|u_\tau\r|_{L^p(0,T;W^{1,p}(\Omega))}\leq D_6 \quad \text{for all } \tau \in (0,\tau^*)\,.
\eeq
Recalling now \eqref{pier5'} and the fact that $\gamma$ is Lipschitz continuous, 
the two estimates \eqref{u_est} and \eqref{u_est'} along with the position \eqref{p}
ensure that there exists a positive constant $C_4$ such that
\beq
\begin{split}
  \label{4est}
  \l|u_\tau\r|_{L^{4p/3}(Q)\cap L^p(0,T;W^{1,p}(\Omega))} + \l|\vartheta_\tau\r|_{L^{4p/3}(Q)\cap L^p(0,T;W^{1,p}(\Omega))}\leq C_4\\ \quad\text{for all } \tau \in (0,\tau^*)\,.
  \end{split} 
\eeq

\subsection{The fifth estimate}
We now prove that there is a positive constant $C_5$ such that
\beq
\label{5est}
  \l|\frac{\partial\vartheta_\tau}{\partial t}\r|_{L^p(0,T;W^{1,q}(\Omega)')}\leq C_5 \quad\text{for all } \tau \in (0,\tau^*)\,,
\eeq
where $p$ and $q$ are the coniugate exponents introduced in \eqref{p}. 

We proceed by taking $w\in W^{1,q}(\Omega)$ in 
equation \eqref{3'}: isolating the first term, thanks to the Hölder inequality
and conditions \eqref{bound_sig_k} and \eqref{beta_lin}, we obtain
\[
  \begin{split}
  \int_{\Omega}{\frac{\partial\vartheta_{\tau}}{\partial t}(t)w\,\pier{dv}}&=
  -\int_{\Omega}{\nabla u_{\tau}(t)\cdot\nabla w\,\pier{dv}}
  -\int_{\Gamma_{\pier{\ell}}}{\beta(u_{\tau}(t))w\,ds}\\
  &\quad{}+\int_{\Omega}{\sigma(\vartheta_{\tau}(t-\tau))T_\tau\bigl(\left|\nabla\phi_{\tau}(t)\right|^2\bigr)w\,\pier{dv}}
  +\int_{\Gamma_{\pier{\ell}}}{h_{\Gamma}(t)w\,ds}\\
  &\leq\l|\nabla u_\tau(t)\r|_{L^p(\Omega)}\l|\nabla w\r|_{L^q(\Omega)}+C_\beta \l|w\r|_{L^1(\Gamma_{\pier{\ell}})}+C_\beta \l|u_\tau(t)\r|_{L^p(\Gamma_{\pier{\ell}})}
  \l|w\r|_{L^q(\Gamma_{\pier{\ell}})}\\
  &\quad{}+ \sigma^* \l| \phi_{\tau}\r|^2_{L^\infty(0,T;\H1)}
  \l|w\r|_{L^\infty(\Omega)}+\l|h_{\Gamma}(t)\r|_{L^2(\Gamma_{\pier{\ell}})}\l|w\r|_{L^2(\Gamma_{\pier{\ell}})}\,.
  \end{split}
\]
Recall that for all $r\in[1,+\infty)$ there exists a constant $M_r>0$ such that
\beq
  \label{trace}
  \l|w\r|_{L^r(\Gamma_{\pier{\ell}})}\leq M_r\l|w\r|_{W^{1,r}(\Omega)} \quad\text{for all } w\in W^{1,r}(\Omega)
\eeq
(see, e.g., \cite{BG87}) and that the following embedding 
$
  W^{1,q}(\Omega)\subseteq L^\infty(\Omega)
$
is continuous since $q >3$ and $\Omega\subseteq\Ar^3$; hence, using \eqref{3est} and \eqref{u_est'}, from the previous calculations we deduce that
\[
  \int_{\Omega}{\frac{\partial\vartheta_{\tau}}{\partial t}(t)w\,\pier{dv}}\leq
  \bigl(\l|u_\tau(t)\r|_{W^{1,p}(\Omega)}+D_7+ D_8 \l|h_{\Gamma}(t)\r|_{L^2(\Gamma_{\pier{\ell}})}\bigr)\l|w\r|_{W^{1,q}(\Omega)}\,,
\]
for all $w\in W^{1,q}(\Omega)$ and for some constants $D_7,\, D_8$ independent of $\tau$. 
Then, we have that 
\[
  \l|\frac{\partial\vartheta_\tau}{\partial t}(t)\r|_{W^{1,q}(\Omega)'}\leq
 \l|u_\tau(t)\r|_{W^{1,p}(\Omega)}+D_7+ D_8 \l|h_{\Gamma}(t)\r|_{L^2(\Gamma_{\pier{\ell}})}
\]
for almost all $t\in[0,T]$; taking the $p$th power of this 
expression and integrating on $(0,T)$, as $p<2$
and \eqref{u_est'} holds we infer
\[
  \l|\frac{\partial\vartheta_\tau}{\partial t}\r|_{L^p(0,T;W^{1,q}(\Omega)')}\leq
  D_6+ T^{1/p}D_7 + D_8 T^{1/p-1/2}\l|h_\Gamma\r|_{L^2(0,T;L^2(\Gamma_{\pier{\ell}}))}
\]
and \eqref{5est} follows easily from the above inequality.

%%%%%%%%%%%%%%%%%%%%%%%%%%%%%%%%%%%%%%%%%%%%%%%%%%%%%%%%%%%%%%%%%%%%%%

\section{The passage to the limit}
\setcounter{equation}{0}
\label{limit}

In this last section, we collect all the arguments needed to pass to the limit in the \pier{approximating} problem \eqref{1'}--\eqref{init'}
as $\tau\searrow0$ and recover in this way a solution of the original problem \eqref{1}--\eqref{init}.

\subsection{The terms in $\vartheta_\tau$ and $u_\tau$}
Firstly, let us point out that
\[
  W^{1,p}\bigl(\Omega\bigr)\subseteq\subseteq W^{1-\eps,p}\bigl(\Omega\bigr)\subseteq W^{1,q}\bigl(\Omega\bigr)' \quad \hbox{for all }\, \eps \in (0,1],  
\]
where the symbol $\subseteq\subseteq$ indicates a compact inclusion: hence, conditions \eqref{4est} and \eqref{5est}
ensure that we are able to apply the result contained in \cite[Cor.~4, p.~85]{simon} and infer that $\{\vartheta_\tau\}_{\tau \in (0,\tau^*)}$ is 
relatively compact in $L^p(0,T;W^{1-\eps,p}(\Omega))$. It follows that there exists
$\vartheta$  and a subsequence $\{\tau_n\}_{n\in\En}$ such that
\beq
  \label{conv_teta}
  \vartheta_{\tau_n}\rarr\vartheta \quad\text{in } L^p(0,T;W^{1-\eps,p}(\Omega)) 
  \quad \text{as } n\rarr\infty\,.
\eeq
Hence, we let $1- \eps >1/p$, that is, $ \eps < 1-1/p$,  and observe that, due
to the trace theory for Sobolev spaces (see, e.g., \cite[Thm.~2.24, p.~1.61]{BG87}), we have the convergence
\beq
  \label{conv_teta-pier}
  \vartheta_{\tau_n}\rarr\vartheta \, \ \text{in } L^p(Q) \, \hbox{ and, for the traces, } \, 
  \vartheta_{\tau_n}|_{\Gamma_\ell} \rarr \vartheta|_{\Gamma_\ell}\, \text{ in } L^p(\Sigma_\ell).
\eeq
Moreover, as $u_{\tau_n} = \gamma^{-1} (\vartheta_{\tau_n})$ and $\gamma^{-1} 
=K$ is Lipschitz continuous (see \eqref{pier5'} and \eqref{gamma}),  we also have 
\beq
  \label{conv_u-pier}
  u_{\tau_n}\rarr u : = K(\vartheta) \, \ \text{in } L^p(Q) \, \hbox{ and, for the traces, } \, 
  u_{\tau_n}|_{\Gamma_\ell} \rarr u|_{\Gamma_\ell}\, \text{ in } L^p(\Sigma_\ell)
\eeq
as $n\to \infty.$ 
Furthermore, since condition \eqref{4est} implies weak compactness, we also have that
\beq
  \label{conv-teta-u}
 \vartheta_{\tau_n}\rarrw\vartheta  \, \hbox{ and } \, 
  u_{\tau_n}\rarrw u
 \quad\text{in }\, L^{4p/3}(Q)\cap L^p(0,T;W^{1,p}(\Omega))
  \quad\text{as } n\rarr\infty\,.
\eeq
On the other hand, the estimate \eqref{5est} implies that there exists
\[
  \eta\in L^p\bigl(0,T;W^{1,q}(\Omega)'\bigr)
\]
such that (in principle for a subsequence)
\beq
  \label{conv_eta}
  \frac{\partial\vartheta_{\tau_n}}{\partial t}\rarrw\eta \quad\text{in } L^p\bigl(0,T;W^{1,q}(\Omega)'\bigr)
  \quad\text{as } n\rarr\infty\,.
\eeq
Now, as $\vartheta_{\tau_n}\in W^{1,p}(0,T;W^{1,q}(\Omega)')$ for all $n$, we have that
\[
  \int_0^T{\left<\vartheta_{\tau_n}(t),\varphi'(t)\right>_{W^{1,q}(\Omega)}\,dt}=
  -\int_0^T{\left<\frac{\partial\vartheta_{\tau_n}}{\partial t}(t),\varphi(t)\right>_{W^{1,q}(\Omega)}dt}
\]
for all $\varphi$ say in $ \mathscr{D}\bigl(0,T;W^{1,q}(\Omega)\bigr);$
then, letting $n\rarr\infty$ and using \eqref{conv-teta-u}--\eqref{conv_eta} we deduce that
\[
  \int_0^T{\left<\vartheta(t),\varphi'(t)\right>_{W^{1,q}(\Omega)}\,dt}=
  -\int_0^T{\left<\eta(t),\varphi(t)\right>_{W^{1,q}(\Omega)}\,dt}
  \quad\forall\, \varphi\in\mathscr{D}\bigl(0,T;W^{1,q}(\Omega)\bigr)\,.
\]
Hence, it follows that $\eta = \frac{\partial\vartheta}{\partial t}$, that is, 
$\vartheta\in W^{1,p}\bigl(0,T;W^{1,q}(\Omega)'\bigr) $ and 
\beq
  \label{conv3_teta}
  \vartheta_{\tau_n}\rarrw\vartheta \quad\text{in } W^{1,p}\bigl(0,T;W^{1,q}(\Omega)'\bigr)
  \quad\text{as } n\rarr\infty\,.
\eeq
Next, we recall \eqref{conv_u-pier} and
for the traces (now, denoted again without the symbol $|_{\Gamma_\ell}$) we conclude that, possibly extracting another subsequence, 
\beq
  \label{conv4_u}
  u_{\tau_n}\rarr u \quad\text{a.e. in } \Sigma_{\pier{\ell}}
\eeq
which implies that
\beq
  \label{conv_beta}
  \luca{\beta(u_{\tau_n})\rarr \beta(u) \quad\text{a.e. in } \Sigma_{\pier{\ell}} }
\eeq
as $n\to \infty.$ 
Secondly, conditions \eqref{conv_u-pier} and \eqref{beta_lin} ensure that the sequence $\{\beta(u_{\tau_n})\}_{n\in\En}$ is bounded in $L^p(\Sigma_{\pier{\ell}})$; 
consequently, since $T|\Gamma_{\pier{\ell}}|<+\infty$, we can apply the 
Severini-Egorov theorem to check that \eqref{conv_beta} entails 
\beq
  \label{conv3_beta}
  \beta(u_{\tau_n})\rarr\beta(u) \quad\text{in }\,  L^r\bigl(\Sigma_{\pier{\ell}}\bigr) 
  \, \text{ for all } r \in [1,p) 
\eeq
and   
\beq
   \label{conv2_beta}
  \beta(u_{\tau_n})\rarrw\beta(u) \quad\text{in } L^p\bigl(\Sigma_{\pier{\ell}}\bigr) 
  \quad\text{as } n\rarr\infty\,.\\ 
\eeq

\subsection{The terms in $V_\tau$, $\psi_\tau$ and $\phi_\tau$}
In analogy with the arguments used in \eqref{conv-teta-u}--\eqref{conv3_teta} and in view of 
\eqref{2est}--\eqref {3est}, by weak star compactness we infer the existence of 
\beq
%  \label{sol_psi_phi}
\nonumber  V\in W^{1,\infty} (0,T) , \quad 
  \psi\in L^\infty\bigl(0,T;\gH1\bigr), \quad \phi\in L^\infty\bigl(0,T;\H1\bigr)
\eeq
such that
\begin{gather}
  \label{conv_V}
  V_{\tau_n}\weakstar V \quad\text{in } W^{1,\infty}  (0,T)\, , \\
  \label{conv_psi}
  \psi_{\tau_n}\weakstar \psi \quad\text{in } L^\infty\bigl(0,T;\gH1\bigr), \\
  \label{conv_phi}
  \phi_{\tau_n}\weakstar\phi \quad\text{in } L^\infty\bigl(0,T;\H1\bigr)
\end{gather}
as $n \to \infty$. In principle, one should possibly extract another subsequence, but let us say 
that the subsequence $\tau_n$ can be selected at the same time as for \eqref{conv-teta-u},  
\eqref{conv3_teta} and \eqref{conv_V}--\eqref{conv_phi}.  Thanks to \eqref{conv_V}, we 
are able to apply the Ascoli theorem (see, e.g., \cite[Lem.~1, p.~71]{simon}) and deduce 
that 
\beq
  \label{conv2_V}
  V_{\tau_n}\rarr V \quad \text{in } C^0([0,T]) \quad\text{as } n\rarr\infty\,.
\eeq

We aim to prove that the quintuplet $(V,\phi, \psi, \vartheta, u)$ solves the problem \eqref{1}--\eqref{init}. Then, 
we start to check  that the limit functions $V$ and $\vartheta$ satisfy the initial conditions  \eqref{init}. It is clear that $V(0)=V_0$ thanks to \eqref{init'} and \eqref{conv2_V}.
Moreover, condition \eqref{conv3_teta} ensures that 
\[
  \left<\vartheta_{\tau_n}(0),w\right>_{W^{1,q}(\Omega)}\rarr\left<\vartheta(0),w\right>_{W^{1,q}(\Omega)} 
  \quad\text{for all } w\in W^{1,q}(\Omega)\,, \quad\text{as } n\rarr\infty\,,
\]
from which we also deduce that $\vartheta(0)=\vartheta_0$. Moreover, we point out that 
\eqref{psi_u} is satisfied due to \eqref{psi_u'} and \eqref{conv_V}--\eqref{conv_phi},
and \eqref{pier5} follows from \eqref{pier5'} and \eqref{conv_teta-pier}--\eqref{conv_u-pier}. 
Hence, it remains to check \eqref{1}--\eqref{3}: for this aim, we have to 
pass to the limit in the nonlinear coupling terms of \eqref{1'}--\eqref{3'}. 
First, we consider the term $\sigma(\vartheta_{\tau}(t-\tau))$ and prove through some intermediate steps that
\beq
  \label{conv_sigma}
  \sigma\bigl(\vartheta_{\tau_n}(\cdot-\tau_n)\bigr)\rarr\sigma\bigl(\vartheta\bigr) \quad \text{in }\,  L^r(Q)
\, \text{ for all } r\in[1,+\infty)\,,    \quad\text{as } n\rarr\infty\,.
\eeq
\begin{lem}
  \label{lemma1}
  In the current hypotheses, by extending $\vartheta$ as in \eqref{primadi0} we have that
  \beq
    \label{conv_tetatau}
    \vartheta(\cdot-\tau_n)\rarr\vartheta \quad\text{in } L^p(Q) \quad \text{as } n\rarr\infty\,.
  \eeq
\end{lem}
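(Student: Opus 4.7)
My plan is to reduce the claim to the classical continuity of translations in Bochner $L^p$ spaces. First, I would verify that the extended function lands in $L^p(\Ar; L^p(\Omega))$: from estimate \eqref{4est} we have $\vartheta \in L^{4p/3}(Q) \subseteq L^p(Q) = L^p(0,T;L^p(\Omega))$, since $Q$ has finite measure and $4p/3 > p$, while $\vartheta_0 \in L^2(\Omega) \subseteq L^p(\Omega)$ by \eqref{pier3}, using $p < 2$ and $|\Omega| < \infty$.

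Then I would extend $\vartheta$ in a way compatible with \eqref{primadi0} to a function $\tilde\vartheta \in L^p(\Ar; L^p(\Omega))$, for instance by setting $\tilde\vartheta \equiv \vartheta_0$ on $(-1,0)$, $\tilde\vartheta \equiv \vartheta$ on $[0,T]$, and $\tilde\vartheta \equiv 0$ elsewhere. The main step is now to invoke the standard translation continuity in Bochner $L^p$: for every $f \in L^p(\Ar; L^p(\Omega))$,
\[
  \|f(\cdot - h) - f\|_{L^p(\Ar; L^p(\Omega))} \longrightarrow 0 \quad \text{as } h \to 0,
\]
which is proved by density of $C_c(\Ar; L^p(\Omega))$ in $L^p(\Ar; L^p(\Omega))$ together with an $\eps/3$ argument exploiting uniform continuity of continuous, compactly supported representatives.

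Applying this with $f = \tilde\vartheta$ and $h = \tau_n$, and noting that once $\tau_n < 1$ the shifted extension $\tilde\vartheta(\cdot - \tau_n)$ coincides on $[0,T]$ with the function $\vartheta(\cdot - \tau_n)$ prescribed by \eqref{primadi0}, I would conclude
\[
  \|\vartheta(\cdot - \tau_n) - \vartheta\|_{L^p(Q)} \leq \|\tilde\vartheta(\cdot - \tau_n) - \tilde\vartheta\|_{L^p(\Ar; L^p(\Omega))} \longrightarrow 0,
\]
which is exactly \eqref{conv_tetatau}. There is no substantive obstacle here: the only mildly delicate point is ensuring that the chosen extension agrees with \eqref{primadi0} on $[0,T]$ after shifting, so that the Bochner translation-continuity directly transfers to the required statement on $Q$.
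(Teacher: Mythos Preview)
Your proposal is correct and follows essentially the same route as the paper: both rest on the continuity of translations in $L^p$, obtained via a density/$\eps/3$ argument. The paper carries out the $\eps/3$ step explicitly on $Q$ (approximating $\vartheta$ by $\eta_\eps\in C^0(\overline{Q})$, extending $\eta_\eps$ by $\vartheta_0$ for negative times, and splitting $\|\vartheta(\cdot-\tau_n)-\vartheta\|_{L^p(Q)}$ into three pieces), whereas you package the same mechanism as the standard Bochner translation-continuity lemma applied to a compactly supported extension; the content is identical.
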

\begin{proof}
Since $C^0(\overline{Q})$ is dense in $L^p(Q)$, for all $\eps>0$ there exists $\eta_\eps\in C^0(\overline{Q})$ such that
\[
  \l|\eta_\eps-\vartheta\r|_{L^p(Q)}\leq\eps\,.
\]
Moreover, if we let $\luca{\eta_\epsilon (t) = \vartheta_0 }$ for $t<0$, we can write
\[
  \begin{split}
  &\l|\vartheta(\cdot-\tau_n)-\vartheta\r|_{L^p(Q)}\\
  &\leq
  \l|\vartheta(\cdot-\tau_n)-\eta_\eps(\cdot-\tau_n)\r|_{L^p(Q)}
  +\l|\eta_\eps(\cdot-\tau_n)-\eta_\eps\r|_{L^p(Q)}+
  \l|\eta_\eps-\vartheta\r|_{L^p(Q)}. 
  \end{split}
\]
Now, the first and the third term on the right hand side of the previous expression are 
under control of $\eps$,  and for such fixed $\eps >0 $ and for all $n$ 
sufficiently large the second term in the right hand side is bounded by $\eps$
as well (recall that $\vartheta_0 \in \L2 \subset L^p(\Omega) $ and use the Lebesgue 
dominated convergence theorem, for example).
\end{proof}

\begin{lem}
  \label{lemma2}
  In the current hypotheses, we have that
  \beq
    \label{conv-teta-utau}
    \vartheta_{\tau_n}(\cdot-\tau_n)\rarr\vartheta \quad\text{in } L^p(Q)\,, \quad\text{as } n\rarr\infty\,.
  \eeq
\end{lem}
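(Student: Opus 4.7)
The plan is to reduce Lemma~\ref{lemma2} to Lemma~\ref{lemma1} via a triangle inequality, by adding and subtracting the shifted limit $\vartheta(\,\cdot-\tau_n)$:
\[
  \l|\vartheta_{\tau_n}(\cdot-\tau_n)-\vartheta\r|_{L^p(Q)}
  \leq
  \l|\vartheta_{\tau_n}(\cdot-\tau_n)-\vartheta(\cdot-\tau_n)\r|_{L^p(Q)}
  +\l|\vartheta(\cdot-\tau_n)-\vartheta\r|_{L^p(Q)}\,.
\]
The second summand tends to $0$ as $n\to\infty$ by Lemma~\ref{lemma1}. The first summand is where all the work is, but here the work is essentially bookkeeping: since both $\vartheta_{\tau_n}$ and $\vartheta$ are extended by $\vartheta_0$ on $(-\infty,0)$ (see \eqref{primadi0}), on the slab $\{t<\tau_n\}$ the integrand is identically zero, so a change of variable $s=t-\tau_n$ yields
\[
  \l|\vartheta_{\tau_n}(\cdot-\tau_n)-\vartheta(\cdot-\tau_n)\r|_{L^p(Q)}^p
  =\int_0^{T-\tau_n}\!\!\int_\Omega\bigl|\vartheta_{\tau_n}(s)-\vartheta(s)\bigr|^p\,dv\,ds
  \leq\l|\vartheta_{\tau_n}-\vartheta\r|_{L^p(Q)}^p.
\]

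The right hand side vanishes as $n\to\infty$ thanks to the strong convergence already established in \eqref{conv_teta-pier}. Combining the two bounds gives \eqref{conv-teta-utau}.

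I do not expect any serious obstacle here: the only subtlety is to keep track of the extension of both $\vartheta_{\tau_n}$ and $\vartheta$ to $t<0$ by the same constant $\vartheta_0$ (this is exactly why \eqref{primadi0} was set up, and why the telescoping above leaves no boundary term). Once this is observed, the argument is just a triangle inequality plus a translation-of-variables computation, and Lemma~\ref{lemma2} follows at once. With \eqref{conv-teta-utau} in hand, the continuity and uniform boundedness of $\sigma$ (see \eqref{bound_sig_k}), combined with extraction of an a.e.\ subsequence and the Lebesgue dominated convergence theorem, will then yield the desired strong convergence \eqref{conv_sigma} of $\sigma(\vartheta_{\tau_n}(\cdot-\tau_n))$ in every $L^r(Q)$, $r\in[1,+\infty)$, as needed in the subsequent passage to the limit.
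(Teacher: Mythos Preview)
Your proof is correct and follows exactly the same route as the paper: the triangle inequality with intermediate term $\vartheta(\cdot-\tau_n)$, then Lemma~\ref{lemma1} for the second summand and the translation bound $\l|\vartheta_{\tau_n}(\cdot-\tau_n)-\vartheta(\cdot-\tau_n)\r|_{L^p(Q)}\le\l|\vartheta_{\tau_n}-\vartheta\r|_{L^p(Q)}$ together with \eqref{conv_teta-pier} for the first. You have in fact spelled out the change-of-variables justification that the paper leaves as ``an easy calculation''.
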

\begin{proof}
An easy calculation shows that
\[
  \begin{split}
  \l|\vartheta_{\tau_n}(\cdot-\tau_n)-\vartheta\r|_{L^p(Q)}&\leq
  \l|\vartheta_{\tau_n}(\cdot-\tau_n)-\vartheta(\cdot-\tau_n)\r|_{L^p(Q)}+
  \l|\vartheta(\cdot-\tau_n)-\vartheta\r|_{L^p(Q)}\\
  &\leq\l|\vartheta_{\tau_n}-\vartheta \r|_{L^p(Q)}+
  \l|\vartheta(\cdot-\tau_n)-\vartheta \r|_{L^p(Q)}\rarr0
  \end{split}
\]
as $n\rarr\infty$, using conditions \eqref{conv_teta-pier} and \eqref{conv_tetatau}.
\end{proof}

We are now ready to prove \eqref{conv_sigma}.
Indeed, condition \eqref{conv-teta-utau} implies that there is a further subsequence of $\{\tau_n\}_{n\in\En}$, for which we keep the same notation, such that
\[
  \vartheta_{\tau_n}(\cdot-\tau_n)\rarr\vartheta\quad\text{a.e. in } Q \,;
\]
on account of \eqref{bound_sig_k}, the continuity of $\sigma$ implies that
\beq
  \label{conv2_sigma}
  \sigma\bigl(\vartheta_{\tau_n}(\cdot-\tau_n)\bigr)\rarr\sigma\bigl(\vartheta \bigr) \quad\text{a.e. in } Q\,, \quad\text{as } n\rarr\infty,
\eeq
and this convergence and the boundedness of $\sigma$ ensure that we can apply the dominated convergence theorem and thus deduce \eqref{conv_sigma}.

The information we have collected are sufficient to pass to the limit in equation \eqref{2'}: 
indeed, we note that the sequence $\{ \sigma(\vartheta_{\tau_n}(\cdot-\tau_n))\nabla\psi_{\tau_n} \}$ is bounded in $L^\infty \bigl( 0,T; \L2^3\bigr)$ (cf.~\eqref{3est}
and \eqref{bound_sig_k}) and weakly converges to 
$ \sigma(\vartheta)\nabla\psi $, e.g., in $ L^1\bigl( 0,T; L^p(\Omega)^3\bigr) $ 
(cf.~ \eqref{conv_sigma} and \eqref{conv_psi}), so that actually the weak star convergence $\sigma(\vartheta_{\tau_n}(\cdot-\tau_n))\nabla\psi_{\tau_n} \weakstar \sigma(\vartheta)\nabla\psi$  holds in $L^\infty \bigl( 0,T; \L2^3\bigr)$. On the other hand, from  \eqref{conv_sigma} and \eqref{conv2_V} it follows that
\beq
  \label{conv_sig_V}
  \sigma\bigl(\vartheta_{\tau_n}(\cdot-\tau_n)\bigr) V_{\tau_n}\rarr\sigma\bigl(\vartheta\bigr) V \quad \text{in }\,  L^r(0,T; L^r(\Omega)),
\, \text{ for all } r\in[1,+\infty)\,.
\eeq
Then, we can pass to the limit in \eqref{2'} (or in some equivalent time-integrated version of it,
with test function $w \in  L^1 \bigl(0,T;\gH1\bigr))$ to obtain \eqref{2}.

Finally, let us complete this subsection proving a stronger convergence of $\{\psi_{\tau_n}\}_{n\in\En}$.
\begin{lem}
  \label{lemma2-pier}
  In the current hypotheses, as $n \to \infty $ we have that
\begin{gather}
  \label{conv2_psi}
  \psi_{\tau_n}\rarr\psi \quad\text{in } L^2\bigl(0,T;\gH1\bigr)
  \,,\\
  \label{conv2_phi}
  \phi_{\tau_n}\rarr\phi \quad\text{in } L^2\bigl(0,T;\H1\bigr)\,.
\end{gather}
\end{lem}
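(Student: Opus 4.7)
The plan is to prove \eqref{conv2_psi} through a Minty-type energy identity for the elliptic equation \eqref{2'}, and then to deduce \eqref{conv2_phi} at once from \eqref{psi_u'}--\eqref{psi_u} and the uniform convergence \eqref{conv2_V}. I would start by testing \eqref{2'} with $w=\psi_{\tau_n}(t)$ and integrating in time, obtaining
\begin{equation}\nonumber
\int_0^T\int_{\Omega}\sigma(\vartheta_{\tau_n}(t-\tau_n))|\nabla\psi_{\tau_n}(t)|^2\,\pier{dv}\,dt = -\int_0^T\int_{\Omega}\sigma(\vartheta_{\tau_n}(t-\tau_n))\frac{V_{\tau_n}(t)}{\ell}\frac{\partial\psi_{\tau_n}}{\partial z}(t)\,\pier{dv}\,dt,
\end{equation}
together with the analogous identity for $\psi$ obtained by testing \eqref{2} with $w=\psi(t)$. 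The weak-star convergence \eqref{conv_psi} implies in particular $\partial_z\psi_{\tau_n}\rightharpoonup\partial_z\psi$ weakly in $L^2(Q)$, while \eqref{conv_sig_V} yields the strong convergence $\sigma(\vartheta_{\tau_n}(\cdot-\tau_n))V_{\tau_n}\to\sigma(\vartheta)V$ in $L^2(Q)$. Hence the right-hand side passes to the limit and I deduce
\begin{equation}\nonumber
\int_0^T\int_{\Omega}\sigma(\vartheta_{\tau_n}(\cdot-\tau_n))|\nabla\psi_{\tau_n}|^2\,\pier{dv}\,dt \;\longrightarrow\; \int_0^T\int_{\Omega}\sigma(\vartheta)|\nabla\psi|^2\,\pier{dv}\,dt.
\end{equation}

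Next I would expand
\begin{equation}\nonumber
A_n := \int_0^T\int_{\Omega}\sigma(\vartheta_{\tau_n}(\cdot-\tau_n))|\nabla(\psi_{\tau_n}-\psi)|^2\,\pier{dv}\,dt
\end{equation}
as a sum of the square term just handled, a cross term, and a term of the form $\int\sigma(\vartheta_{\tau_n}(\cdot-\tau_n))|\nabla\psi|^2$. For the cross term, \eqref{conv2_sigma} combined with the boundedness of $\sigma$ from \eqref{bound_sig_k} and Lebesgue's dominated convergence gives $\sigma(\vartheta_{\tau_n}(\cdot-\tau_n))\nabla\psi\to\sigma(\vartheta)\nabla\psi$ strongly in $L^2(Q)^3$; pairing this with $\nabla\psi_{\tau_n}\rightharpoonup\nabla\psi$ weakly in $L^2(Q)^3$ (again coming from \eqref{conv_psi}), the cross term tends to $\int_Q\sigma(\vartheta)|\nabla\psi|^2$. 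The same dominated convergence handles the last term, and the three contributions combine to give $A_n\to 0$. The coercivity $\sigma\ge\sigma_*>0$ from \eqref{bound_sig_k} then forces $\nabla(\psi_{\tau_n}-\psi)\to 0$ in $L^2(Q)^3$, and the Poincar\'e inequality in $\gH1$ upgrades this to \eqref{conv2_psi}.

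Finally, for \eqref{conv2_phi} I would write $\phi_{\tau_n}-\phi=(\psi_{\tau_n}-\psi)+\frac{z}{\ell}(V_{\tau_n}-V)$, owing to \eqref{psi_u'} and \eqref{psi_u}: the first summand tends to zero in $L^2(0,T;\H1)$ by the previous step, while the second does so by the uniform convergence \eqref{conv2_V}. The only delicate point of the argument is the identification carried out in the first step, where the strong $L^2(Q)$ convergence of the product $\sigma(\vartheta_{\tau_n}(\cdot-\tau_n))V_{\tau_n}$ (which hinges on the uniform, rather than merely weak, convergence \eqref{conv2_V}) must be coupled with the weak $L^2(Q)$ convergence of $\partial_z\psi_{\tau_n}$; once this product limit is secured, the remaining Minty expansion together with the coercivity and the Poincar\'e inequality is routine.
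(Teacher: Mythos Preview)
Your argument is correct, and the treatment of \eqref{conv2_phi} via \eqref{psi_u'}, \eqref{psi_u} and \eqref{conv2_V} coincides with the paper's. For \eqref{conv2_psi}, however, you take a slightly different route. The paper subtracts \eqref{2} from \eqref{2'} and tests the difference by $\psi_{\tau_n}(t)-\psi(t)$; this yields directly
\[
\int_\Omega\sigma\bigl(\vartheta_{\tau_n}(t-\tau_n)\bigr)|\nabla(\psi_{\tau_n}-\psi)(t)|^2\,\pier{dv}
= -\!\int_\Omega\!\bigl[\sigma(\vartheta_{\tau_n}(t-\tau_n))-\sigma(\vartheta(t))\bigr]\nabla\psi(t)\cdot\nabla(\psi_{\tau_n}-\psi)(t)
-\frac{1}{\ell}\!\int_\Omega\!\bigl[\sigma(\vartheta_{\tau_n})V_{\tau_n}-\sigma(\vartheta)V\bigr]\partial_z(\psi_{\tau_n}-\psi),
\]
and Young's inequality plus dominated convergence and \eqref{conv_sig_V} kill the right-hand side after integration in time. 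Your Minty-type variant first establishes the energy convergence $\int_Q\sigma(\vartheta_{\tau_n}(\cdot-\tau_n))|\nabla\psi_{\tau_n}|^2\to\int_Q\sigma(\vartheta)|\nabla\psi|^2$ by testing \eqref{2'} with $\psi_{\tau_n}$, and only then expands $A_n$. Both arguments rest on the same three ingredients---\eqref{conv_sig_V}, the strong convergence $\sigma(\vartheta_{\tau_n}(\cdot-\tau_n))\nabla\psi\to\sigma(\vartheta)\nabla\psi$ in $L^2(Q)^3$ via \eqref{conv2_sigma} and dominated convergence, and the coercivity $\sigma\ge\sigma_*$---so the difference is organizational: the paper's subtraction is one step shorter, while your expansion is a bit more modular and would adapt more readily to situations where the limit equation is identified only a posteriori.
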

\begin{proof}
As a matter of fact, taking the difference between equations \eqref{2'} (written for $\tau_n$) 
and \eqref{2}, and testing it by $w=\psi_{\tau_n}(t)-\psi(t)\in\gH1$,
after a simple calculation, for almost all $t\in[0,T]$ we obtain that 
\[
  \begin{split}
  &\int_\Omega{\sigma\bigl(\vartheta_{\tau_n}(t-\tau_n)\bigr)
  \left|\nabla(\psi_{\tau_n}-\psi)(t)\right|^2\,\pier{dv}}\\
  &+\int_\Omega{\left[\sigma\bigl(\vartheta_{\tau_n}(t-\tau_n)\bigr)-\sigma\bigl(\vartheta(t)\bigr)\right]\nabla\psi(t)\cdot\nabla
  \bigl(\psi_{\tau_n}(t)-\psi(t)\bigr)\,\pier{dv}}\\
  &+\int_\Omega{\left[\sigma\bigl(\vartheta_{\tau_n}(t-\tau_n)\bigr)V_{\tau_n}(t)-\sigma\bigl(\vartheta(t)\bigr)V(t)\right]
  \frac{\partial}{\partial z}\bigl(\psi_{\tau_n}(t)-\psi(t)\bigr)\,\pier{dv}}=0\,.
  \end{split}
\]
Hence, using the Young inequality and condition \eqref{bound_sig_k} we deduce that
\[
  \begin{split}
  \sigma_*\l|\nabla(\psi_{\tau_n}-\psi)(t)\r|^2_{L^2(\Omega)^3} &\leq
  \eps\l|\nabla(\psi_{\tau_n}-\psi)(t)\r|^2_{\L2^3}\\
  &\quad{}+\frac{1}{2\eps}\int_{\Omega}{\left|\sigma\bigl(\vartheta_{\tau_n}(t-\tau_n)\bigr)
  -\sigma\bigl(\vartheta(t)\bigr)\right|^2\left|\nabla\psi(t)\right|^2\,\pier{dv}}\\
  &\quad{}+\frac{1}{2\eps}\int_\Omega{\left|\sigma\bigl(\vartheta_{\tau_n}(t-\tau_n)\bigr)V_{\tau_n}(t)-\sigma\bigl(\vartheta(t)\bigr)V(t)\right|^2\,\pier{dv}}
  \end{split}
\]
for all $\eps>0$. If we make the particular choice $\eps=\sigma_*/2$ and integrate on $(0,T)$, we have that
\[
  \begin{split}
  \frac{\sigma_*}{2}\l|\nabla (\psi_{\tau_n}-\psi)\r|^2_{L^2(0,T;L^2(\Omega))}&\leq
  \frac{1}{\sigma_*}\int_{Q}{\left|\sigma\bigl(\vartheta_{\tau_n}(\cdot -\tau_n)\bigr)
  -\sigma\bigl(\vartheta \bigr)\right|^2\left|\nabla\psi \right|^2\,\pier{dv}dt}\\
  &\quad +\frac{1}{\sigma_*}\int_Q{\left|\sigma\bigl(\vartheta_{\tau_n}(\cdot -\tau_n)\bigr)
  V_{\tau_n}-\sigma\bigl(\vartheta\bigr)V\right|^2\,\pier{dv}dt} .
  \end{split}
\]
Thanks to conditions \eqref{conv2_sigma} and \eqref{conv_sig_V}, using the dominated convergence theorem we see that
the right hand side of the above inequality goes to $0$ as $n\rarr\infty$. 
Therefore, from the Poincar\'e inequality
we infer condition \eqref{conv2_psi}. At this point,  \eqref{conv2_phi} turns out to be a straightforward consequence of \eqref{psi_u'} and \eqref{conv2_V}. 
\end{proof}

\subsection{Conclusion of the proof}
By virtue of \eqref{conv2_phi} and \eqref{conv_sigma}, now we can take the limit in the nonlinear term of equation \eqref{1'} and obtain \eqref{1}, using \eqref{conv_V}
to pass to the limit in the left hand side. To deal with \eqref{3'},
we need to show that 
\beq
  \label{conv_trunc}
  \sigma\bigl(\vartheta_{\tau_n}(\cdot-\tau_n)\bigr)T_{\tau_n}\bigl(\left|\nabla\phi_{\tau_n}\right|^2\bigr)\rarr
  \sigma\bigl(\vartheta\bigr)\left|\nabla\phi\right|^2 \quad\text{in } L^1(Q) \quad\text{as } n\rarr\infty\,.
\eeq
To this aim, we take advantage of the property stated in the next lemma.
\begin{lem}
  \label{lemma3}
  Let $T_\tau$ the truncation operator defined in \eqref{trunc} and let $\{z_n\}_{n\in\En}\subseteq L^1(Q)$ such that $
    z_n\rarr z $ in $L^1(Q).$ Then, it turns out that $ T_{\tau_n}(z_n)\rarr z $ in $ L^1(Q)$ as $ n\rarr\infty$.
\end{lem}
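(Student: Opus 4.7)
The plan is to exploit two elementary features of the truncation operator $T_\tau$ defined in \eqref{trunc}: first, that $T_\tau:\Ar\to\Ar$ is $1$-Lipschitz for every $\tau>0$; second, that $|T_\tau(r)|\leq|r|$ for every $r\in\Ar$ and that $T_\tau(r)\to r$ as $\tau\searrow 0$ for every fixed $r\in\Ar$ (since eventually $|r|\leq 1/\tau$). Both facts follow by inspection of the three cases in \eqref{trunc}.

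With this in mind, I would split the quantity to be estimated via the triangle inequality by inserting $T_{\tau_n}(z)$:
\[
  \l|T_{\tau_n}(z_n)-z\r|_{L^1(Q)}\leq
  \l|T_{\tau_n}(z_n)-T_{\tau_n}(z)\r|_{L^1(Q)}+\l|T_{\tau_n}(z)-z\r|_{L^1(Q)}\,.
\]
For the first summand, the pointwise $1$-Lipschitz estimate $|T_{\tau_n}(z_n)-T_{\tau_n}(z)|\leq|z_n-z|$ (valid a.e.\ in $Q$) gives
\[
  \l|T_{\tau_n}(z_n)-T_{\tau_n}(z)\r|_{L^1(Q)}\leq\l|z_n-z\r|_{L^1(Q)}\rarr 0 \as n\rarr\infty\,,
\]
directly from the assumption $z_n\rarr z$ in $L^1(Q)$.

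For the second summand, I would observe that since $\tau_n\searrow 0$, for almost every $(x,z,t)\in Q$ one has $|z(x,z,t)|\leq 1/\tau_n$ for all $n$ large enough, so that $T_{\tau_n}(z)\rarr z$ a.e.\ in $Q$. Combining this with the uniform domination $|T_{\tau_n}(z)|\leq|z|\in L^1(Q)$, Lebesgue's dominated convergence theorem yields $\l|T_{\tau_n}(z)-z\r|_{L^1(Q)}\rarr 0$, which completes the argument. No step looks particularly delicate here: the decomposition reduces the problem to the two structurally independent facts above, so the only thing to take care of is to write the $1$-Lipschitz estimate case by case if one wants to be pedantic.
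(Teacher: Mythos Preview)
Your proof is correct and follows essentially the same approach as the paper: both use the triangle inequality with the intermediate term $T_{\tau_n}(z)$, the $1$-Lipschitz property of $T_{\tau_n}$ for the first summand, and dominated convergence for the second. Your write-up merely spells out the domination $|T_{\tau_n}(z)|\leq|z|$ and the pointwise convergence more explicitly than the paper does.
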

\begin{proof}
Thanks to the Lipschitz continuity of $T_{\tau_n}$ we have
\[
  \left|T_{\tau_n}(z_n)-z\right|\leq\left|T_{\tau_n}(z_n)-T_{\tau_n}(z)\right|+\left|T_{\tau_n}(z)-z\right|\leq
  \left|z_n-z\right|+\left|T_{\tau_n}(z)-z\right|
\]
and the dominated convergence theorem enables us to conclude the proof. 
\end{proof}
At this point, we observe that
\[
  \begin{split}
  &\left| \sigma\bigl(\vartheta_{\tau_n}(\cdot-\tau_n)\bigr)T_{\tau_n}\bigl(\left|\nabla\phi_{\tau_n}\right|^2\bigr)-
  \sigma\bigl(\vartheta\bigr)\left|\nabla\phi\right|^2\right| \\
  &\leq
  \sigma\bigl(\vartheta_{\tau_n}(\cdot-\tau_n)\bigr)\left|T_{\tau_n}\bigl(\left|\nabla\phi_{\tau_n}\right|^2\bigr)-\left|\nabla\phi\right|^2\right| +\left|\nabla\phi\right|^2\left|\sigma\bigl(\vartheta_{\tau_n}(\cdot-\tau_n)\bigr)-\sigma\bigl(\vartheta\bigr)\right|\,;
  \end{split}
\]
by integrating on $(0,T)$ the above inequality, the first term on the right hand side
goes to $0$ as $n\rarr\infty$  owing to the boundedness of  $\sigma$ and
 Lemma~\ref{lemma3}, while the second term tends to $0$ by virtue of \eqref{conv2_sigma}.
Hence, \eqref{conv_trunc} is proved.

Finally, letting $n\rarr\infty$ in \eqref{3'} and 
using conditions \eqref{conv-teta-u}, \eqref{conv3_teta}, \eqref{conv2_beta} and \eqref{conv_trunc}, we show exactly that equation \eqref{3} is satisfied. 

%%%%%%%%%%%%%%%%%%%%%%%%%%%%%%%%%%%%%%%%%%%%%%%%%%%%%%%%%%%%%%%%%%%%%

\end{document}